\nonstopmode \numberwithin{equation}{section}
\newtheorem{thm}[equation]{Theorem}
\newtheorem{cor}[equation]{Corollary}
\newtheorem{lem}[equation]{Lemma}
\newtheorem{prop}[equation]{Proposition}
\newtheorem{conj}{Conjecture}
\theoremstyle{definition}
\newtheorem{prob}[equation]{Problem}
\newenvironment{rem}{%
\bigskip
\noindent \textsl{{\sl Remark. }}}{\bigskip}
\newenvironment{rems}{%
\bigskip
\noindent \textsl{{\sl Remarks. }}}{\bigskip}
\newcounter {own}
\def\theown {\thesection       .\arabic{own}}
\newenvironment{pf}[1][]{%
 \vskip 3mm
 \noindent
 \ifthenelse{\equal{#1}{}}%
  {{\slshape Proof. }}%
  {{\slshape #1.} }%
 }%
{\qed
\medskip
}
\newcounter{alphabet}
\newcounter{tmp}
\newcounter{minutes}\setcounter{minutes}{\time}
\newcounter{hours}\setcounter{hours}{\time}
\def\be{\begin{equation}}
\def\ee{\end{equation}}
\newcommand{\bee}{\begin{enumerate}}
\newcommand{\eee}{\end{enumerate}}
\newcommand{\blem}{\begin{lem}}
\newcommand{\elem}{\end{lem}}
\newcommand{\bthm}{\begin{thm}}
\newcommand{\ethm}{\end{thm}}
\newcommand{\bcor}{\begin{cor}}
\newcommand{\ecor}{\end{cor}}
\newcommand{\beg}{\begin{examp}}
\newcommand{\eeg}{\end{examp}}
\newcommand{\begs}{\begin{examples}}
\newcommand{\eegs}{\end{examples}}
\newcommand{\bdefe}{\begin{defin}}
\newcommand{\edefe}{\end{defin}}
\newcommand{\bprob}{\begin{prob}}
\newcommand{\eprob}{\end{prob}}
\newcommand{\bei}{\begin{itemize}}
\newcommand{\eei}{\end{itemize}}
\newcommand{\bcon}{\begin{conj}}
\newcommand{\econ}{\end{conj}}
\newcommand{\bcons}{\begin{conjs}}
\newcommand{\econs}{\end{conjs}}
\newcommand{\bprop}{\begin{prop}}
\newcommand{\eprop}{\end{prop}}
\newcommand{\br}{\begin{rem}}
\newcommand{\er}{\end{rem}}
\newcommand{\brs}{\begin{rems}}
\newcommand{\ers}{\end{rems}}
\newcommand{\bo}{\begin{obser}}
\newcommand{\eo}{\end{obser}}
\newcommand{\bos}{\begin{obsers}}
\newcommand{\eos}{\end{obsers}}
\newcommand{\bpf}{\begin{pf}}
\newcommand{\epf}{\end{pf}}
\newcommand{\ba}{\begin{array}}
\newcommand{\ea}{\end{array}}
\newcommand{\beq}{\begin{eqnarray}}
\newcommand{\beqq}{\begin{eqnarray*}}
\newcommand{\eeq}{\end{eqnarray}}
\newcommand{\eeqq}{\end{eqnarray*}}
\begin{document}

\title{Logarithmic coefficients for certain subclasses of close-to-convex functions}

\author{U. Pranav Kumar}
\address{Pranav Kumar Upadrashta,
Department of Mechanical Engineering,
Indian Institute of Technology Kharagpur,
Kharagpur-721 302, West Bengal, India.}
\email{upk1993@gmail.com}

\author{A. Vasudevarao}
\address{A. Vasudevarao,
Department of Mathematics,
Indian Institute of Technology Kharagpur,
Kharagpur-721 302, West Bengal, India.}
\email{alluvasu@maths.iitkgp.ernet.in}

\subjclass[2010]{Primary 30C45, 30C50}
\keywords{Analytic, univalent, starlike, convex and close-to-convex functions, coefficient estimates, logarithmic coefficients.}

\def\thefootnote{}
\footnotetext{ {\tiny File:~\jobname.tex,
printed: \number\year-\number\month-\number\day,
          \thehours.\ifnum\theminutes<10{0}\fi\theminutes }
} \makeatletter\def\thefootnote{\@arabic\c@footnote}\makeatother

\thanks{}

\maketitle
\pagestyle{myheadings}
\markboth{U. Pranav Kumar and A. Vasudevarao }{Logarithmic coefficients}

\begin{abstract}
Let $\mathcal{S}$ denote the class of functions analytic and univalent (i.e. one-to-one) in the unit disk 
$\mathbb{D}=\{z\in\mathbb{C}:\, |z|<1\}$ normalized by $f(0)=0=f'(0)-1$. The logarithmic coefficients 
$\gamma_n$ of $f\in\mathcal{S}$ are defined  by
$\log \frac{f(z)}{z}= 2\sum_{n=1}^{\infty} \gamma_n z^n.$
In the present paper, we  determine the sharp upper bounds for
$|\gamma_1|$,  $|\gamma_2|$ and $|\gamma_3|$  when $f$ belongs to some
familiar subclasses of close-to-convex  functions.
\end{abstract}

\section{Introduction}\label{Introduction}

Let $\mathbb{D}:=\{z\in\mathbb{C}:\, |z|<1\}$ denote the unit disk in the complex plane $\mathbb{C}$.  A single-valued function $f$ is said to be univalent in a domain $\Omega\subseteq\mathbb{C}$ if it never takes the same value twice, that is, if $f(z_1)=f(z_2)$ for $z_1,z_2 \in\Omega$ then $z_1=z_2$.  Let   $\mathcal{A}$ denote the class of analytic functions $f$ in $\mathbb{D}$ normalized by $f(0)=0=f'(0)-1$. If $f\in\mathcal{A}$ then $f(z)$ has the following representation
\begin{equation}\label{i01}
f(z)= z+\sum_{n=2}^{\infty}a_n z^n.
\end{equation}
Let $\mathcal{S}$ denote  the class of univalent  functions in $\mathcal{A}$.  A domain $\Omega\subseteq\mathbb{C}$ is said to be a starlike domain with respect to a point $z_0\in\Omega$
if the line segment joining $z_0$ to any point in $\Omega$  lies in $\Omega$. If $z_0$ is the origin then we say that $\Omega$ is a starlike domain.
A function $f\in\mathcal{A}$ is said to be a starlike function
if  $f(\mathbb{D})$  is a starlike domain. We denote by $\mathcal{S}^*$ the class of starlike functions $f$ in $\mathcal{S}$. It is well-known that \cite{Duren-book} a function $f\in\mathcal{A}$ is in $\mathcal{S}^*$ if and only if
$$
{\rm Re\,}\left(\frac{zf'(z)}{f(z)}\right)>0 \quad\mbox{ for } z\in\mathbb{D}.
$$
A domain $\Omega$ is said to be convex if it is starlike with respect to each point of $\Omega$.
A function $f\in\mathcal{A}$ is said to be convex if $f(\mathbb{D})$ is a convex domain. We denote the class of convex univalent functions in $\mathbb{D}$ by $\mathcal{C}$. A function $f\in\mathcal{A}$ is in $\mathcal{C}$ if and only if
$$
{\rm Re\,}\left(1+\frac{z f''(z)}{f'(z)}\right)>0 \quad\mbox { for } z\in\mathbb{D}.
$$
It is well-known that  $f\in\mathcal{C}$ if and only if $zf'\in\mathcal{S}^*$.\\


A function $f\in\mathcal{A}$ is said to be close-to-convex (having argument $\alpha\in(-\pi/2,\pi/2)$) with respect to $g\in\mathcal{S}^*$  if
$$
{\rm Re\,}\left(e^{i\alpha}\frac{zf'(z)}{g(z)}\right)>0 \quad\mbox { for } z\in\mathbb{D}.
$$
We denote the class of all such functions by $\mathcal{K}_{\alpha}(g)$. Let
$$
\mathcal{K}(g):= \bigcup_{\alpha\in(-\pi/2,\,\pi/2)} \mathcal{K}_{\alpha}(g) \quad\mbox{ and }\quad \mathcal{K}_{\alpha}:= \bigcup_{g\in\mathcal{S}^*} \mathcal{K}_{\alpha}(g)
$$
be the classes of close-to-convex functions with respect to $g$ and close-to-convex functions with
argument $\alpha$, respectively. Let
$$
\mathcal{K}:= \bigcup_{\alpha\in(-\pi/2,\,\pi/2)} \mathcal{K}_{\alpha}= \bigcup_{g\in\mathcal{S}^*} \mathcal{K}(g)
$$
denote  the class of  close-to-convex functions in $\mathcal{A}$. It is well-known that every close-to-convex function is univalent in $\mathbb{D}$ \cite{Kaplan-52}.
A domain $\Omega\subseteq\mathbb{C}$ is said to be linearly accessible if its complement is the union of a family of non-intersecting half-lines.
A function $f\in\mathcal{S}$ whose range is linearly accessible is called a linearly accessible function.
Kaplan's theorem \cite{Kaplan-52} makes it seem plausible that the class of linearly accessible family and the class $\mathcal{K}$ coincide.
In fact, Lewandowski \cite{Lewandowski-1958}  has observed that the class $\mathcal{K}$
is the same as the class of linearly accessible functions introduced by Biernacki \cite{Biernacki-1937} in 1936. In 1962,
Bielecki and Lewandowski \cite{Bi-lewan} proved that every function in the class $\mathcal{K}$ is  linearly accessible.

Let $\mathcal{P}$ denote the class of analytic functions $h(z)$ of the form
\begin{equation}\label{positiveRealPart}
h(z)=1+\sum_{n=1}^\infty c_n z^n
\end{equation}
such that ${\rm Re}\, h(z)>0$ in $\mathbb{D}$.
To prove our main results we need the following results.
\begin{lem}\cite{Li-Zl-1984}\label{thm-4}
Let  $h \in \mathcal{P}$ be of the form (\ref{positiveRealPart}). Then
\begin{eqnarray*}\label{eq12}
2c_2&=&c_{1}^{2}+x(4-c_{1}^{2})\\
4c_3&=&c_{1}^{3}+2(4-c_{1}^{2})c_1x-c_1(4-c_{1}^{2})x^2+2(4-c_{1}^{2})(1-|x|^2)t.
\end{eqnarray*}
for some complex valued $x$ and  $t$ with $|x|\leq 1$ and   $|t|\leq 1$.
\end{lem}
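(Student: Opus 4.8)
\begin{pf}[Sketch of proof]
The identities are classical (Libera--Z\l otkiewicz); I indicate the route I would take. Since $\mathcal{P}$ is preserved by the rotation $h(z)\mapsto h(e^{i\theta}z)$, under which $c_n\mapsto e^{in\theta}c_n$, and since only the rotation--invariant moduli $|\gamma_n|$ enter the applications, I would first normalize so that $c_1=c\in[0,2]$, the range dictated by Carath\'eodory's inequality $|c_1|\le 2$. The plan is then to transfer the problem from $h\in\mathcal{P}$ to an associated Schwarz function and to extract the parameters $x,t$ by applying the Schwarz--Pick estimate twice (the Schur algorithm).

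First I would set $\omega(z)=(h(z)-1)/(h(z)+1)$. Because $\real h>0$ and $h(0)=1$, the map $\omega$ is analytic on $\mathbb{D}$, fixes the origin, and satisfies $|\omega|<1$, so $\omega$ is a Schwarz function. Inverting gives $h=2/(1-\omega)-1$, and writing $\omega(z)=w_1z+w_2z^2+w_3z^3+\cdots$ and matching Taylor coefficients yields $c_1=2w_1$, $c_2=2w_2+2w_1^2$ and $c_3=2w_3+4w_1w_2+2w_1^3$; in particular $w_1=c_1/2$, which records $|c_1|\le2$ since $|w_1|\le1$ by Schwarz's lemma.

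Next I would quantify the freedom in $w_2,w_3$. As $\phi:=\omega/z$ maps $\mathbb{D}$ into $\overline{\mathbb{D}}$ with $\phi(0)=w_1$, I apply Schwarz's lemma to the disk automorphism $B_{w_1}\circ\phi$, where $B_{w_1}(\zeta)=(\zeta-w_1)/(1-\overline{w_1}\zeta)$, to write $(B_{w_1}\circ\phi)(z)=z\,\eta(z)$ with $\eta:\mathbb{D}\to\overline{\mathbb{D}}$. Evaluating at the origin and using $B_{w_1}'(w_1)=1/(1-|w_1|^2)$ gives $\eta(0)=w_2/(1-|w_1|^2)=:x$ with $|x|\le1$, hence $w_2=(1-|w_1|^2)x$. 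A second application, now to $\eta$, via $|\eta'(0)|\le 1-|\eta(0)|^2$, produces $\eta'(0)=(1-|x|^2)t$ with $|t|\le1$, and after relating $\eta'(0)$ back to $w_3$ this yields $w_3=(1-|w_1|^2)\big[(1-|x|^2)t-\overline{w_1}x^2\big]$. Substituting $w_1=c_1/2$ (real, so $1-|w_1|^2=(4-c_1^2)/4$) into the three coefficient identities and simplifying then delivers $2c_2=c_1^2+x(4-c_1^2)$ and $4c_3=c_1^3+2(4-c_1^2)c_1x-c_1(4-c_1^2)x^2+2(4-c_1^2)(1-|x|^2)t$, as claimed.

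The genuinely delicate point is the second Schur step: one must expand $(B_{w_1}\circ\phi)(z)$ to second order and isolate the term $\overline{w_1}w_2^2/(1-|w_1|^2)$, so that after dividing out the forced zero the Schwarz--Pick bound produces exactly the factor $(1-|x|^2)$ multiplying $t$ rather than a spurious constant. Everything else is bookkeeping, and it is precisely the normalization $c_1\in[0,2]$ that converts the naturally occurring $|w_1|^2=|c_1|^2/4$ into $c_1^2/4$ and so matches the stated closed forms verbatim.
\end{pf}
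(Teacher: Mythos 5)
Your sketch is correct, and the first thing to say is that the paper itself contains no proof of this lemma: it is quoted from Libera and Z{\l}otkiewicz \cite{Li-Zl-1984}, so the only comparison possible is with that source, whose derivation goes through the Carath\'eodory--Toeplitz characterization of $\mathcal{P}$ (nonnegativity of the Toeplitz determinants $D_2$ and $D_3$) rather than through the Schwarz function. Your route --- pass to $\omega=(h-1)/(h+1)$ and run two steps of the Schur algorithm --- is the other standard proof, and the details check out: the coefficient identities $c_1=2w_1$, $c_2=2w_2+2w_1^2$, $c_3=2w_3+4w_1w_2+2w_1^3$ are right; the coefficient of $z^2$ in $B_{w_1}\circ\phi$ is $w_3/(1-|w_1|^2)+\overline{w_1}\,w_2^2/(1-|w_1|^2)^2$, which is exactly your $\eta'(0)$, so $w_2=(1-|w_1|^2)x$ and $w_3=(1-|w_1|^2)\bigl[(1-|x|^2)t-\overline{w_1}x^2\bigr]$ follow from Schwarz--Pick (with the degenerate cases $|w_1|=1$ and $|x|=1$ handled by choosing $x$ or $t$ arbitrarily); and substituting $w_1=c/2$ reproduces both displayed formulas. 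You are also right to flag the normalization: the computation naturally produces $4-|c_1|^2$, and the printed formulas with $4-c_1^2$ are literally correct only when $c_1$ is real in $[0,2]$ (for the $c_2$ identity alone one can rescale $x$ using $4-|c_1|^2\le|4-c_1^2|$, but the simultaneous $c_3$ formula really does presuppose the normalization). Since the paper only ever applies the lemma after setting $c_1=c\in[0,2]$, this matches its use. In summary: your argument is a complete and correct proof by a genuinely different method from the cited one; the determinant approach yields in addition a necessary-and-sufficient characterization and identifies the extremal (boundary) functions, while your Schur-algorithm approach is shorter, self-contained, and makes the provenance of the parameters $x$ and $t$ and their ranges transparent.
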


\begin{lem}\cite[pp 166]{pommerenke}
\label{lem2}
Let  $h \in \mathcal{P}$ be of the form (\ref{positiveRealPart}). Then
\begin{equation*}
\Bigl|c_2-{\tiny \frac{c_1^2}{2}}\Bigr| \leq 2 - {\tiny \frac{|c_1|^2}{2}}.
\end{equation*}
\end{lem}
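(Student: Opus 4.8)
The inequality is a sharp Fekete--Szeg\H{o}-type bound for the Carath\'eodory class $\mathcal{P}$, and the natural plan is to isolate the exact dependence of the quantity $c_2-c_1^2/2$ on the free parameters describing $h$. The quickest route uses Lemma~\ref{thm-4}. First I would note that both sides of the asserted inequality are invariant under the rotation $h(z)\mapsto h(e^{i\varphi}z)$: this rotation belongs to $\mathcal{P}$ and sends $c_n\mapsto c_n e^{in\varphi}$, hence $c_2-\tfrac{c_1^2}{2}\mapsto e^{2i\varphi}\bigl(c_2-\tfrac{c_1^2}{2}\bigr)$, leaving $|c_2-\tfrac{c_1^2}{2}|$ and $|c_1|$ unchanged. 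I would therefore first rotate so that $c_1=|c_1|\in[0,2]$, the normalization under which Lemma~\ref{thm-4} is stated.

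Under that normalization Lemma~\ref{thm-4} gives $2c_2=c_1^2+x(4-c_1^2)$ for some $x$ with $|x|\le 1$, so that
\[
c_2-\frac{c_1^2}{2}=\frac{1}{2}\,x\,(4-c_1^2).
\]
Since now $c_1\in[0,2]$ is real, $4-c_1^2=4-|c_1|^2\ge 0$, and taking moduli yields
\[
\Bigl|c_2-\frac{c_1^2}{2}\Bigr|=\frac{1}{2}\,|x|\,(4-|c_1|^2)\le 2-\frac{|c_1|^2}{2}.
\]
Undoing the rotation preserves both sides, completing the argument.

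The one delicate point -- the main obstacle -- is precisely this reduction to $c_1\ge 0$. Lemma~\ref{thm-4} is written with $c_1^2$ rather than $|c_1|^2$, so applying it to a genuinely complex $c_1$ would only give the bound $\tfrac12|4-c_1^2|$, which can be as large as $2+|c_1|^2/2$ (for instance when $c_1$ is purely imaginary) and hence useless; the rotation invariance must therefore be invoked \emph{before} Lemma~\ref{thm-4} is used. If one prefers an entirely self-contained derivation avoiding this normalization, set $\omega=(h-1)/(h+1)$, a holomorphic self-map of $\mathbb{D}$ with $\omega(0)=0$, and expand $\omega(z)=\sum_{n\ge1}b_n z^n$. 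From $h=(1+\omega)/(1-\omega)=1+2\omega+2\omega^2+\cdots$ one reads off $c_1=2b_1$ and $c_2=2b_2+2b_1^2$, whence the quadratic terms cancel and $c_2-c_1^2/2=2b_2$. Applying the Schwarz--Pick lemma to $\psi:=\omega/z$, which maps $\mathbb{D}$ into $\overline{\mathbb{D}}$ with $\psi(0)=b_1$ and $\psi'(0)=b_2$, gives $|b_2|\le 1-|b_1|^2$; multiplying by $2$ returns $|c_2-c_1^2/2|\le 2-|c_1|^2/2$ directly, the only external input being the coefficient bound $|b_2|\le 1-|b_1|^2$ for bounded analytic functions.
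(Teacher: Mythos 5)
Your argument is correct. Note first that the paper itself offers no proof of this lemma: it is imported verbatim from Pommerenke's book (p.~166), so there is nothing internal to compare against line by line. Of your two routes, the second --- writing $h=(1+\omega)/(1-\omega)$ with a Schwarz function $\omega=b_1z+b_2z^2+\cdots$, observing the cancellation $c_2-\tfrac{c_1^2}{2}=2b_2$, and invoking $|b_2|\le 1-|b_1|^2$ --- is essentially the textbook derivation behind the cited result, and is the cleaner of the two since it needs no normalization. Your first route is also valid, and you are right to flag the delicate point: Lemma~\ref{thm-4} as printed in the paper (with $c_1^2$ rather than $|c_1|^2$ in the term $x(4-c_1^2)$) is the Libera--Zlotkiewicz formula under the tacit normalization $c_1\ge 0$, and applying it blindly to complex $c_1$ would only give $\tfrac12|4-c_1^2|$, which fails to imply the claim (e.g.\ for purely imaginary $c_1$); the rotation $h(z)\mapsto h(e^{i\varphi}z)$, which multiplies $c_2-\tfrac{c_1^2}{2}$ by a unimodular factor and fixes $|c_1|$, repairs this correctly. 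The only thing you do not address is the sharpness statement attached to the lemma (the functions $L_{t,\theta}$), but that is outside the inequality you were asked to prove; if you wanted it, equality in your second route corresponds to $|b_2|=1-|b_1|^2$, which the Schwarz--Pick analysis identifies with $\omega$ a Blaschke product of degree at most $2$, matching the two-term convex combinations $L_{t,\theta}$.
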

The inequality is sharp for functions $L_{t,\theta}(z)$ of the form
\begin{equation*}
L_{t,\theta}(z)= t\left(\frac{1+e^{i\theta}z}{1-e^{i\theta}z}\right)+(1-t)\left(\frac{1+e^{i2\theta}z^2}{1-e^{i2\theta}z^2}\right).
\end{equation*}

\begin{lem}\cite{Ma-Minda-1992}\label{lem3}
Let $h\in\mathcal{P}$ be of the form (\ref{positiveRealPart}) and $\mu$ be a complex number. Then
\begin{equation*}
|c_2-\mu c_1^2|\le 2\, \max\{1,|2\mu-1|\}.
\end{equation*}
The result is sharp for the functions given by $p(z)=\frac{1+z^2}{1-z^2}$ and $p(z)=\frac{1+z}{1-z}$.
\end{lem}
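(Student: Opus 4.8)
The plan is to reduce the estimate to the already sharp ``centered'' inequality of Lemma~\ref{lem2} by an elementary splitting. I would write
\[
c_2-\mu c_1^2=\Bigl(c_2-\tfrac{c_1^2}{2}\Bigr)+\Bigl(\tfrac12-\mu\Bigr)c_1^2 ,
\]
and then apply the triangle inequality together with Lemma~\ref{lem2} and the standard Carath\'eodory bound $|c_1|\le 2$ to obtain
\[
|c_2-\mu c_1^2|\le \Bigl|c_2-\tfrac{c_1^2}{2}\Bigr|+\Bigl|\tfrac12-\mu\Bigr|\,|c_1|^2\le 2-\frac{|c_1|^2}{2}+\Bigl|\tfrac12-\mu\Bigr|\,|c_1|^2 .
\]
Setting $r=|c_1|\in[0,2]$ and using $|\tfrac12-\mu|=\tfrac12|2\mu-1|$, the right-hand side collapses to $2+\tfrac{r^2}{2}\bigl(|2\mu-1|-1\bigr)$, which is an affine function of $r^2$ on $[0,4]$.

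The remaining step is simply to maximize this affine expression, and this is exactly what produces the two cases in the statement. If $|2\mu-1|\le 1$ the coefficient of $r^2$ is nonpositive, so the maximum is at $r=0$ and equals $2=2\max\{1,|2\mu-1|\}$; if $|2\mu-1|>1$ the coefficient is positive, the maximum is at $r=2$, and it equals $2+2(|2\mu-1|-1)=2|2\mu-1|=2\max\{1,|2\mu-1|\}$. In both cases we arrive at $|c_2-\mu c_1^2|\le 2\max\{1,|2\mu-1|\}$.

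Finally I would verify sharpness by matching each case to one of the two exhibited extremal functions. For $p(z)=(1+z^2)/(1-z^2)$ one has $c_1=0$, $c_2=2$, so $|c_2-\mu c_1^2|=2$, which realizes equality when $|2\mu-1|\le 1$; for $p(z)=(1+z)/(1-z)$ one has $c_1=c_2=2$, so $|c_2-\mu c_1^2|=|2-4\mu|=2|2\mu-1|$, realizing equality when $|2\mu-1|>1$. Together these show the bound is attained and cannot be improved.

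I do not expect a serious obstacle: the computation is routine once the splitting is in place, and the only points needing care are that $\mu$ is complex (so the estimate must be phrased through $|2\mu-1|$ rather than $\real\mu$) and that one must track the sign of $|2\mu-1|-1$ when passing from the affine bound in $r^2$ to its maximum. Should one prefer to avoid Lemma~\ref{lem2}, an equivalent route uses the rotational invariance of $|c_2-\mu c_1^2|$ under $h(z)\mapsto h(\lambda z)$ with $|\lambda|=1$ to normalize $c_1\ge 0$, and then feeds the representation $2c_2=c_1^2+x(4-c_1^2)$ from Lemma~\ref{thm-4} (now with $4-c_1^2\ge 0$ and $|x|\le 1$) into the same triangle-inequality argument, leading to the identical one-variable maximization.
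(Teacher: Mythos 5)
Your proof is correct. The paper does not actually prove this lemma --- it is quoted from Ma and Minda \cite{Ma-Minda-1992} --- so there is no in-paper argument to compare against; but your derivation is a clean, self-contained route: splitting $c_2-\mu c_1^2=(c_2-\tfrac{c_1^2}{2})+(\tfrac12-\mu)c_1^2$, invoking Lemma~\ref{lem2} and $|c_1|\le 2$, and maximizing the resulting affine function of $|c_1|^2$ gives exactly $2\max\{1,|2\mu-1|\}$, and your verification of sharpness on $(1+z^2)/(1-z^2)$ and $(1+z)/(1-z)$ correctly matches the two cases $|2\mu-1|\le 1$ and $|2\mu-1|>1$. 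The only caveat worth recording is that your argument leans on Lemma~\ref{lem2}, which the paper also states without proof; if full self-containment were required you would need to supply that inequality as well (e.g.\ via the representation $2c_2=c_1^2+x(4-c_1^2)$, $|x|\le 1$, from Lemma~\ref{thm-4}, which you already note as an alternative).
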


Given a function $f \in \mathcal{S}$, the coefficients $\gamma_n$  defined by
\begin{equation}\label{i05}
\log \frac{f(z)}{z} = 2\sum_{n=1}^{\infty} \gamma_n z^n
\end{equation}
are called the logarithmic coefficients of $f(z)$. The logarithmic coefficients are central to the theory of univalent functions for their role in the proof of Bieberbach conjecture.
Milin conjectured that for $f\in\mathcal{S}$ and $n\ge 2$,
$$
\sum_{m=1}^{n}\sum_{k=1}^{m} \left(k|\gamma_k|^2-\frac{1}{k}\right)\le 0.
$$
Since  Milin's conjecture implies Bieberbach conjecture,
in 1985, De Branges proved  Milin conjecture to give an affirmative proof of the Bieberbach conjecture \cite{Branges-1985}.

By differentiating (\ref{i05}) and equating coefficients we obtain
\begin{eqnarray}
\gamma_1&=&\frac{1}{2} a_2 \label{i10}\\
\gamma_2&=&\frac{1}{2}(a_3-\frac{1}{2}a_2^2) \label{i15}\\
\gamma_3&=&\frac{1}{2}(a_4-a_2a_3+\frac{1}{3}a_2^3). \label{i20}
\end{eqnarray}
It is evident from (\ref{i10})  that $|\gamma_1|\le 1$  if  $f\in\mathcal{S}$. An application of Fekete-Szeg\"{o} inequality \cite[Theorem 3.8]{Duren-book} in (\ref{i15}) yields the following sharp estimate
$$
|\gamma_2|\le \frac{1}{2}(1+2e^{-2})=0.635\ldots\quad\mbox { for } f\in\mathcal{S}.
$$
The problem of finding the sharp upper bound  for $|\gamma_n|$ for  $f\in\mathcal{S}$ is still open for  $n\ge 3$.
The sharp upper bounds for  modulus of logarithmic coefficients are known for functions in very few subclasses of $\mathcal{S}$.
For the Koebe function $k(z)=z/(1-z)^2$, the logarithmic coefficients are $\gamma_n=1/n$. Since the Koebe function $k(z)$ plays the role of extremal function for most of the extremal problems in the class $\mathcal{S}$, it is expected that $|\gamma_n|\le \frac{1}{n}$ holds for functions in the class $\mathcal{S}$. However, this is not true in general. Indeed, there exists a bounded function $f$ in the class $\mathcal{S}$ with logarithmic coefficients $\gamma_n\ne O(n^{-0.83})$ (see \cite[Theorem 8.4]{Duren-book}).
A simple exercise shows that $|\gamma_n| \leq 1/n$ for functions in $\mathcal{S}^*$ and the equality holds for the Koebe function. Consequently, attempts have been made to find bounds for logarithmic coefficients for close-to-convex functions  in the unit disk $\mathbb{D}$.
Elhosh \cite{Elhosh-1996} attempted to extend the result $|\gamma_n|\leq 1/n$ to the class  $\mathcal{K}$. However
Girela \cite{Girela-2000} pointed out an error in the proof and proved that for every $n \geq 2$ there exists a function $f$ in $\mathcal{K}$ such that $|\gamma_n|\geq 1/n$. Ye \cite{Ye-2008} provided an estimate for $|\gamma_n|$ for functions $f$ in the class $\mathcal{K}$, showing that $|\gamma_n| \leq An^{-1} \log n$ where $A$ is a constant. The sharp inequalities are known for sums involving logarithmic coefficients (see \cite{Duren-book,Duren-Leung-1979}).
For $f \in \mathcal{S}$, Roth \cite{Roth-2007} proved the   following sharp inequality
$$ \sum_{n=1}^{\infty} \left(\frac{n}{n+1}\right)^2|\gamma_n|^2 \leq 4 \sum_{n=1}^{\infty} \left(\frac{n}{n+1}\right)^2 \frac{1}{n^2}=\frac{2\pi ^2-12}{3}.
$$

Recently, it has been proved that   $|\gamma_3| \leq 7/12$  for functions in the class $\mathcal{K}_0$  with the additional assumption that the second coefficient of the corresponding starlike function $g(z)$ is real \cite{Thomas-2016}. However this bound is not sharp.
Enough emphasis cannot be laid  on this fact as it highlights nature of complexity involved in obtaining the sharp upper bound  for $|\gamma_3|$.
More recently Firoz and Vasudevarao \cite{Firoz-Vasu-2016} improved the bound on $|\gamma_3|$ by proving $|\gamma_3| \leq \frac{1}{18}(3+4\sqrt{2})=0.4809$ for functions $f$ in the class $\mathcal{K}_0$ without the assumption requiring the second coefficient of the corresponding starlike function $g(z)$ be real.
However,  this improved  bound is still not sharp. Consequently, the problem of finding the sharp upper  bound for $|\gamma_3|$ for the classes $\mathcal{K}_0$ as well as $\mathcal{K}$ is still open.

In the present paper we consider the following  three familiar subclasses of close-to-convex  functions
\begin{eqnarray*}
\mathcal{F}_1:&=&\left\{f\in\mathcal{A}:\textrm{Re\,}(1-z)f'(z)>0 \quad\mbox{ for } z\in\mathbb{D}\right\}\\
\mathcal{F}_2:&=&\left\{f\in\mathcal{A}:\textrm{Re\,}(1-z^2)f'(z)>0 \quad\mbox{ for } z\in\mathbb{D}\right\}\\
\mathcal{F}_3:&=&\left\{f\in\mathcal{A}:\textrm{Re}\,(1-z+z^2)f'(z)>0 \quad\mbox{ for } z\in\mathbb{D} \right\}.
\end{eqnarray*}
The region of variability for the classes $\mathcal{F}_1,\mathcal{F}_2$ and $\mathcal{F}_3$ have been extensively studied by Ponnusamy,  Vasudevarao and  Yanagihara (\cite{Ponnusamy-Vasu-Yanagihara-2008}, \cite{Ponnusamy-Vasu-Yanagihara-2009}).
The main aim of this paper is to determine the sharp upper bounds for $|\gamma_1|$,  $|\gamma_2|$ and  $|\gamma_3|$ for  functions $f$ in the classes
$\mathcal{F}_1,\mathcal{F}_2$ and $\mathcal{F}_3$.

\section{Main Results}
Throughout the remainder of this paper, we assume that $f\in \mathcal{K}_0$ and $h \in \mathcal{P}$ have
the series representations (\ref{i01}) and  (\ref{positiveRealPart}) respectively. Further, assume that $g \in \mathcal{S}^*$ has
the following series representation:
\begin{equation}\label{eq6}
g(z) = z+\sum_{n=2}^{\infty} b_nz^n.
\end{equation}
It is not difficult to see that the function $H_{t,\mu}(z)$ given by
\begin{eqnarray*}
H_{t,\mu}(z) &=& (1-2t)\left(\frac{1+z}{1-z}\right)+t\left(\frac{1+\mu z}{1-\mu z}\right)+t\left(\frac{1 +\overline{\mu} z}{1-\overline{\mu}z}\right)
\end{eqnarray*}
belongs to the class $\mathcal{P}$ for $0\leq t \leq 1/2$ and $|\mu|=1$.
Since $f\in \mathcal{K}_0$, there exists an $h \in \mathcal{P}$ such that
\begin{equation}\label{eq7}
z f'(z) = g(z)h(z).
\end{equation}
Using the representations (\ref{i01}), (\ref{positiveRealPart}) and (\ref{eq6})  in (\ref{eq7}) we obtain
\begin{equation}\label{eq8}
z+\sum_{n=2}^{\infty}n a_nz^n= \left(z+\sum_{n=2}^{\infty}b_nz^n\right)\left(1+\sum_{n=1}^{\infty}c_nz^n\right).
\end{equation}
Comparing the coefficients on both the sides of (\ref{eq8}), we obtain
\begin{eqnarray}
2a_2 &=& b_2 + c_1\label{eq9}\\
3a_3 &= & b_3+b_2c_1+c_2\label{eq10}\\
4a_4&=& b_4+c_1b_3+c_2b_2+c_3\label{eq11}.
\end{eqnarray}
A substitution of   (\ref{eq9}) in (\ref{i10}) gives
\begin{equation}\label{gamma1}
\gamma_1=\frac{1}{4}\left(b_2+c_1\right).
\end{equation}
An application of the triangle inequality to (\ref{gamma1}) gives
\begin{equation}\label{gamma11}
4|\gamma_1| \leq |b_2|+|c_1|.
\end{equation}
Substituting   (\ref{eq9}) and (\ref{eq10})  in (\ref{i15}), we obtain
\begin{equation}\label{gamma2}
\gamma_2= \frac{1}{48}\left(8b_3+2b_2c_1+8c_2-3b^2_2-3c_1^2\right).
\end{equation}
Let $c_1=de^{i\alpha}$ and $q=\cos \alpha$ with $0\leq d\leq 2$ and $0 \leq \alpha < 2\pi$. Applying the triangle inequality in conjunction with Lemma \ref{lem2} allows us to rewrite (\ref{gamma2}) as
\begin{equation}\label{modgamma2}
6|\gamma_2| \leq 2- {\tiny \frac{d^2}{2}}+{\tiny \frac{1}{8}}\Bigl|\left(dq+b_2+id\sqrt{1-q^2}\right)^2+(8b_3-4b_2^2)\Bigr|.
\end{equation}
Substituting   (\ref{eq9}), (\ref{eq10}) and (\ref{eq11}) in (\ref{i20}),  we obtain
\begin{equation}\label{eq14}
\gamma_3=\frac{1}{48}\left(6c_3-b_2^2c_1-b_2c_1^2+2b_2c_2+2b_3c_1+b_2^3-4b_3b_2+6b_4+c_1^3-4c_1 c_2\right).
\end{equation}
A simple application of Lemma \ref{thm-4} to (\ref{eq14}) shows that
\begin{align}\label{gamma3}
96\gamma_3 &=6t(1-|x|^2)(4-c_1^2)+c_1^3+(4b_3-2b_1^2)c_1+(2b_2^3-8b_2b_3+2b_4) \\
           &\qquad \qquad \qquad +x(4-c_1^2)(2b_2+2c_1-3c_1x). \nonumber
\end{align}
Let $b_n$ be real for all $n \in \mathbb{N}$. Let $c_1=c$ and  assume that  $0 \leq c \leq 2$. Let $x=re^{i\theta}$ and $p=\cos \theta$ with $0\leq r\leq 1$ and $0\leq \theta< 2\pi$.
Taking modulus on both the sides of (\ref{gamma3}) and applying the triangle inequality we obtain
\begin{equation}\label{modgamma3}
 96|\gamma_3| \leq 6(1-r^2)(4-c^2)+|\phi(c,r,p)|
\end{equation}
where $$\phi(c,r,p)= c^3+(4b_3-2b_1^2)c+(2b_2^3-8b_2b_3+2b_4)+re^{i\theta}(4-c^2)(2b_2+2c-3cre^{i\theta}).$$
\begin{thm}\label{thm-5}
Let $f \in \mathcal{F}_1$ be given by (\ref{i01}). Then
\begin{enumerate}
\item[{(i)}] $|\gamma_1|\leq \frac{3}{4}$,
\item[{(ii)}] $|\gamma_2|\leq \frac{4}{9}$.
\item[{(iii)}] If  $1/2\leq a_2\leq 3/2$ then $|\gamma_3| \leq \frac{1}{288} \left(11+15 \sqrt{30}\right)$.
\end{enumerate}
The inequalities are sharp.
\end{thm}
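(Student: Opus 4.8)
The plan is to specialize the Section~\ref{Main Results} machinery to $\mathcal{F}_1\subseteq\mathcal{K}_0$. The defining condition $\real\,(1-z)f'(z)>0$ together with $f'(0)=1$ means $(1-z)f'(z)=h(z)$ for some $h\in\mathcal{P}$, i.e. $zf'(z)=g(z)h(z)$ with $g(z)=z/(1-z)$. Since $g$ is starlike and $g(z)=z+\sum_{n\ge2}z^n$, the only input I need is that every structural coefficient equals $b_n=1$. Feeding $b_2=b_3=b_4=1$ into the generic identities (\ref{gamma1}), (\ref{modgamma2}) and (\ref{modgamma3}) turns the three problems into an explicit one-, two-, and three-variable extremal problem respectively.

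Parts (i) and (ii) are short. From (\ref{gamma1}) with $b_2=1$ we get $4|\gamma_1|\le 1+|c_1|\le 3$, using $|c_1|\le2$ for $h\in\mathcal{P}$. For (ii) I put $b_2=b_3=1$ in (\ref{modgamma2}); the bracket becomes $|(1+c_1)^2+4|=|5+2c_1+c_1^2|$, whose maximum over $\arg c_1$ at fixed $d=|c_1|$ is $5+2d+d^2$, attained when $c_1=d\ge0$ is real. This reduces the estimate to $6|\gamma_2|\le 2-\tfrac{d^2}{2}+\tfrac18(5+2d+d^2)$, a concave quadratic in $d$ maximized at $d=1/3$, giving $|\gamma_2|\le 4/9$.

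For (iii) the hypothesis $1/2\le a_2\le3/2$ forces $c_1=2a_2-1\in[0,2]$ to be real, placing us exactly in the real setting of (\ref{modgamma3}) with all $b_n=1$, where $\phi(c,r,p)=A+\beta x-\delta x^2$ with $x=re^{i\theta}$, $A=c^3+2c+6$, $\beta=2(1+c)(4-c^2)$, $\delta=3c(4-c^2)$. I must maximize $\Psi(c,r,p)=6(1-r^2)(4-c^2)+|\phi|$ over $[0,2]\times[0,1]\times[-1,1]$. Writing $B=\beta r$, $C=\delta r^2$ and $p=\cos\theta$, a direct expansion gives $|\phi|^2=A^2+B^2+C^2+2AC+2B(A-C)p-4ACp^2$, a concave quadratic in $p$ with vertex $p^*=B(A-C)/(4AC)$. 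On the $\theta=0$ branch, where $\phi=A+B-C>0$, I optimize $\Psi_0(c,r):=6(1-r^2)(4-c^2)+(A+B-C)$ in $r$: the interior critical point is $r^*=(1+c)/(3(2+c))$, and substituting gives
\[
\Psi_0(c,r^*)=\tfrac23c^3-6c^2+3c+\tfrac{92}{3},
\]
a cubic maximized on $[0,2]$ at $c^*=3-\tfrac{\sqrt{30}}{2}$ with value $\tfrac{11}{3}+5\sqrt{30}$. Dividing by $96$ yields the asserted bound $|\gamma_3|\le\tfrac{1}{288}(11+15\sqrt{30})$.

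The main obstacle is that $\theta=0$ is \emph{not} optimal for every $(c,r)$: the $\theta$-maximum of $|\phi|$ sits at $\theta=0$ only when $p^*\ge1$, and for $r$ near $1$ with $c$ away from $0$ one has $p^*<1$ (indeed $p^*=0$ at $c=r=1$), the maximum then moving to the vertex $p=p^*$ with $|\phi|=\sqrt{G(p^*)}$, $G(p^*)=A^2+B^2+C^2+2AC+B^2(A-C)^2/(4AC)$. I would close this by two checks. First, that the candidate optimizer lies in the good region: at $r=r^*\le1/4$ one has $C=\delta(r^*)^2\le\delta/16<6\le A$ and $p^*(c,r^*)\ge1$ for all $c\in[0,2]$, so the displayed optimum is genuinely attained at $\theta=0$. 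Second, that the complementary region $\{p^*<1\}$ is dominated: there $6(1-r^2)(4-c^2)$ is small and $\sqrt{G(p^*)}\le A+B+C$, and one verifies that along this $\theta$-interior branch $\Psi$ is decreasing in $r$, hence bounded by its value on the curve $p^*=1$, where it coincides with $\Psi_0$ and so is $\le\tfrac{11}{3}+5\sqrt{30}$; this monotonicity estimate is where the real work lies. Sharpness follows by reversing the inequalities: taking $h$ to be the function furnished by Lemma~\ref{thm-4} with $c_1=c^*$, $x=r^*$ (real) and $t=1$ makes every triangle inequality an equality, and the extremal $f$ is recovered from $f'(z)=h(z)/(1-z)$.
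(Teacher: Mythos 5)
Your reduction to $b_n\equiv 1$ and parts (i)--(ii) are correct, and in (iii) you land on the right critical point $\bigl(c^*,r^*\bigr)=\bigl(\tfrac12(6-\sqrt{30}),\tfrac{1}{105}(25-\sqrt{30})\bigr)$ and the right value; your route through the concave quadratic in $p$ is genuinely different from the paper, which instead runs an exhaustive interior/six-faces/twelve-edges search of the cuboid $R$ for $G(c,r,p)$, solving the resulting polynomial systems numerically (e.g.\ a degree-ten resolvent $\zeta_1(c)$ on the face $r=1$). But the one step that actually establishes the upper bound in (iii) --- showing that the region where the $p$-maximum of $|\phi|$ is interior ($p^*<1$) cannot exceed $\tfrac{11}{3}+5\sqrt{30}$ --- is only announced (``this monotonicity estimate is where the real work lies'') and never carried out. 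As written you have maximized $\Psi$ only over the slice $p=1$, which does not prove the global bound. The gap is closable, and cleanly so, in your own notation: since $B^2/(4AC)=\beta^2/(4A\delta)$ is independent of $r$ and $(A-C)^2+4AC=(A+C)^2$, one has the identity $G(p^*)=(A+C)^2\bigl(1+\beta^2/(4A\delta)\bigr)$, so on the branch $p=p^*$ the function $\Psi$ is a quadratic in $r^2$ whose leading coefficient is $(4-c^2)\bigl(3c\sqrt{1+\beta^2/(4A\delta)}-6\bigr)$; the elementary inequality $c(1+c)^2<3(c^3+2c+6)$ on $[0,2]$ makes this negative, and since for each fixed $c$ the set $\{r:\,p^*(c,r)<1\}$ is an interval of the form $(\rho(c),1]$ (because $p^*$ is decreasing in $r$), the decreasing branch is dominated by its value on the curve $p^*=1$, where it equals $\Psi_0$. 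Until something of this kind is supplied, (iii) is not proved.

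A second, smaller gap is sharpness in (iii). Lemma \ref{thm-4} asserts only that for a given $h\in\mathcal{P}$ suitable $x,t$ exist; it does not state the converse, so ``taking $h$ to be the function furnished by Lemma \ref{thm-4} with $c_1=c^*$, $x=r^*$, $t=1$'' presupposes a realization result you have not justified. The paper closes this by exhibiting an explicit $h=H_{t_1,\mu_1}\in\mathcal{P}$ with the required $(c_1,c_2,c_3)$ and setting $zF_1'(z)=\tfrac{z}{1-z}h(z)$; you need to do the same or invoke the Carath\'eodory--Toeplitz characterization of $\mathcal{P}$. Your equality analysis of the triangle inequalities (everything real and positive at $(c^*,r^*,1)$ with $t=1$) is otherwise fine.
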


\begin{proof}
Let $f \in \mathcal{F}_1$. Then $f$ is a close-to-convex function with respect to the starlike function $g(z)=z/(1-z)$. In view of (\ref{eq7}) the function $f(z)$ can be written as
\begin{equation}\label{neweq1}
zf'(z)= \frac{z}{1-z} ~h(z).
\end{equation}
As $|c_1| \leq 2$ for $h \in \mathcal{P}$ (see \cite[Ch 7, Theorem 3]{goodmanv1}) a comparison of the R.H.S. of (\ref{eq7}) and (\ref{neweq1}), shows that (\ref{gamma11}) reduces to
\begin{equation}\label{gamma1prob1}
4|\gamma_1| \leq 1+|c_1|\leq 3.
\end{equation}
A function $p \in \mathcal{P}$ having $|c_1| =2$ is given by $p(z)=L_{1,\theta}(z)$ for $0 \leq \theta <2\pi$ and substituting $p(z)$ in place of $h(z)$ in (\ref{neweq1}) determines a function $f\in\mathcal{F}_1$
for which the upper bound on $|\gamma_1|$ is sharp.

In view of (\ref{eq7}) and (\ref{neweq1}),  we can rewrite (\ref{modgamma2}) as
\begin{equation}\label{gamma2prob1}
6|\gamma_2| \leq 2-\frac{|c_1|^2}{2}+\frac{1}{8}\sqrt{(d^2+5+2d q)^2 -16d^2(1-q^2)}=:g(d,q).
\end{equation}
In view of (\ref{gamma2prob1}) it suffices to find points in the square $S:=[0,2]\times [-1,1]$ where $g(d,q)$ attains the maximum value to determine the maximum value of $|\gamma_2|$.
Solving $\frac{\partial g(d,q)}{\partial d}=0$ and $\frac{\partial g(d,q)}{\partial q}=0$ shows that  there is no real valued solution
to the pair of equations. Thus $g(d,q)$ does not attain  maximum in the interior of $S$.

On the side $d=0$, $g(d,q)$ reduces to $g(0,q)=21/8$. On the side $d=2$, $g(d,q)$ can be written as $g(2,q)=\frac{1}{8}\sqrt{80t^2+72t+17}$. An elementary calculation shows that  $\underset{-1\leq q\leq 1}{\mathrm{max}}\;g(2,q)=g(2,1)=1.625$.

On the side $q=-1$, $g(d,q)$ maybe simplified to $g(d,-1)=(21-2d-3d^2)/8$. It is not difficult to see that $g(d,1)$ is decreasing for $c \in [0,2]$. Thus $\underset{0\leq d\leq 2}{\mathrm{max}}\;g(d,-1)=d(0,-1)=21/8=2.625$.

On the side $q=1$, $g(d,q)$ becomes $g(d,1)=(21+2d-3d^2)/8$. An elementary computation shows that $\underset{0\leq d\leq 2}{\mathrm{max}}\;g(d,1)=d(1/3,1)=8/3.$

Thus the maximum value of $g(d,q)$ and consequently that of $|\gamma_2|$ is attained at $(d,q)=(1/3,1)$, i.e., at  $c_1=1/3$. Thus, from (\ref{gamma2prob1}) we obtain $|\gamma_2| \leq 4/9$. Therefore in view of (\ref{neweq1}) and Lemma \ref{lem2} the equality holds in $\rm{(ii)}$ for  the function $\widetilde{F_1}\in\mathcal{F}_1$ such that $z\widetilde{{F_1}'}(z)=z(1-z)^{-1}L_{t,\theta}(z)$ with $t=1/6$ and $\theta=0$.

In view of (\ref{neweq1}), we may rewrite (\ref{modgamma3}) as
\begin{equation}\label{no17}
48|\gamma_3| \leq 3 \left(4-c^2\right) \left(1-r^2\right) + \sqrt{\phi_1(c,r,p)},
\end{equation}
where
\begin{align*}\label{no18}
\phi_1(c,r,p) &= \left(\frac{c^3}{2}+c+3\right)^2+\left(4-c^2\right)^2 r^2 \left(-3 c^2 p r+\frac{9}{4}c^2 r^2+c^2-3 c p r+2 c+1\right)\\
 &\quad +2 \left(\frac{c^3}{2}+c+3\right) \left(4-c^2\right) r \left(\frac{3}{2}c r-3 c p^2r-1+c p+p\right).
\end{align*}
Let $G(c,r,p)= 3\left(4-c^2\right) \left(1-r^2\right) + \sqrt{\phi_1(c,r,p)}$.
Thus it suffices to find points in the closed cuboid $R:=[0,2]\times[0,1]\times[-1,1]$ where $G(c,r,p)$ attains the maximum value.
We  accomplish this  by finding the maximum values in the interior of the six faces,
on the twelve edges and in the interior of $R$.
%

On the face c=0, it can be seen that $G(c,r,p)$ reduces to
\begin{equation}\label{no19}
G(0,r,p)=\sqrt{24 p r+16 r^2+9}+12 \left(1-r^2\right).
\end{equation}
To determine the points on this face where the maxima occur, we solve $\frac{\partial G(0,r,p)}{\partial r}=0$ and $\frac{\partial G(0,r,p)}{\partial p}=0$. The only solution for this pair of equations is $(r,p)=(0,0)$. Thus, no maxima occur in the interior of the face $c=0$.

On the face c=2, $G(c,r,p)$ becomes $G(2,r,p)=9$ and hence
\begin{equation*}
\underset{0<r<1,~ -1< p< 1}{\mathrm{max}} \; G(2,r,p) = 9.
\end{equation*}
On the face $r=0$, $G(c,r,p)$ reduces to
\begin{equation}\label{no20}
G(c,0,p)=12-3 c^2+\frac{1}{2}\left(c^3+2 c+6\right).
\end{equation}
To determine points where maxima occur, it suffices to find points where $\frac{\partial G(c,0,p)}{\partial c}=0$ because  $G(c,0,p)$ is independent of $p$. The set of all such points is $ \{\frac{1}{3}\left(6-\sqrt{30}\right)\}\times \{0\}\times [-1,1]$ and hence $G\left(\frac{1}{3} \left(6-\sqrt{30}\right),0,p\right)=\frac{10 \sqrt{10}}{3 \sqrt{3}}+9=15.0858$.
Thus
\begin{equation*}
\underset{0< c < 2,~ -1< p< 1} {\mathrm{max}} \; G(c,0,p) = \frac{10 \sqrt{10}}{3 \sqrt{3}}+9=15.0858.
\end{equation*}
On the face $r=1$, $G(c,r,p)$ reduces to
\begin{equation}\label{no21}
G(c,1,p)=\sqrt{\psi_1(c,p)+\frac{1}{2} \left(c^2-4\right) \left(c^3+2 c+6\right) \left(6 c p^2-2cp-2p-3 c\right)}
\end{equation}
where
\begin{equation*}
\psi_1(c,p) = \left(\frac{c^3}{2}+c+3\right)^2+\left(c^2-4\right)^2 \left(\frac{1}{4}(c^2-12pc+8c)+1\right).
\end{equation*}
A computation shows that $\frac{\partial G(c,1,p)}{\partial p}=0$ yields
\begin{equation}\label{pindGdp}
p=\frac{2 c^4+2 c^3-5 c^2-2 c+3}{3 c \left(c^3+2 c+6\right)}.
\end{equation}
A more involved computation shows that $\frac{\partial G(c,1,p)}{\partial c}=0$ implies
\begin{alignat}{3}
&(9c^5-12c^3+27c^2-24c-36)p^2-(12c^5+10c^4-52c^3-30c^2+46c+8)p \label{dGdc} \\
&+(6 c^5+5 c^4-42 c^3-33 c^2+57 c+37)=0. &&\nonumber.
\end{alignat}
Substituting (\ref{pindGdp}) in (\ref{dGdc}) and performing a lengthy computation gives
\begin{equation}\label{dGdc2}
\frac{(c^3-7c-3)\zeta_1 (c)}{3 c^2 \left(c^3+2 c+6\right)^2}=0
\end{equation}
where
$$ \zeta_1 (c)=6 c^{10}-5 c^9+20 c^8+86 c^7-49 c^6+257 c^5+623 c^4-629 c^3-1095 c^2-60 c+36.$$
The numerical solutions of (\ref{dGdc2}) such that $0< c <2$ are $c\approx 0.151355$ and $c \approx 1.30718$. Substituting these values of $c$ in (\ref{pindGdp}) gives $p \approx 0.904769$ and $p \approx 0.050509$.
The corresponding values of $G(c,1,p)$ are $G(0.151355,1,0.904769)=6.83676$ and $G(1.30718,1,0.050509)=11.2488$ respectively.

 As $G(c,1,p)$ is uniformly continuous on $[0,2]\times \{1\}\times [-1,1]$, the difference between extremum values of $G(c,1,p)$ and either of $6.83676$ or $11.2488$ can be made smaller than an $\epsilon \ll 1$.
%
%
Therefore
\begin{equation}
\underset{0< c< 2,~ -1< p< 1}{\mathrm{max}} \; G(c,1,p) \approx 11.2488.
\end{equation}

On the face $p=-1$, $G(c,r,p)$ reduces to
\begin{equation*}
G(c,r,-1)=\frac{1}{2}(3r^2+2r+1)c^3+(3r^2+r-3)c^2-(6r^2+4r-1)c-(12r^2+4r-15).
\end{equation*}
Now we show that $\frac{\partial G(c,r,-1)}{\partial c}=0$ and $\frac{\partial G(c,r,-1)}{\partial r}=0$ have no solution in the interior of this face. On the contrary, assume that $\frac{\partial G(c,r,-1)}{\partial c}=0$ and $\frac{\partial G(c,r,-1)}{\partial r}=0$ have a solution in the interior of the face $p=-1$. Then $\frac{\partial G(c,r,-1)}{\partial r}=0$ gives
\begin{equation}\label{newereq2}
r = \frac{c+1}{3 (2-c)}.
\end{equation}
By substituting (\ref{newereq2}) in $\frac{\partial G(c,r,-1)}{\partial c}=0$, we obtain  $c=\frac{1}{6} \left(-4\pm\sqrt{190}\right)$, both of which lie outside the range of $c \in [0,2]$.

On the face $p=1$, $G(c,r,p)$ reduces to
\begin{equation*}
G(c,r,1)=\frac{1}{2}(3r^2-2r+1)c^3+(3r^2-r-3)c^2-(6r^2+4r-1)c-(12r^2-4r-15).
\end{equation*}
At the points where $G(c,r,1)$ attains the maximum value, $\frac{\partial G(c,r,1)}{\partial c}$ and $\frac{\partial G(c,r,1)}{\partial r}$ necessarily vanish. The solution to the pair of equations $\frac{\partial G(c,r,1)}{\partial c}=0$ and $\frac{\partial G(c,r,1)}{\partial r}=0$ is $(c,r)=\left(\frac{1}{2}(60-\sqrt{30}),\frac{1}{105}(25-\sqrt{30})\right)$ and subsequently
\begin{equation*}
G\left(\frac{1}{2} \left(6-\sqrt{30}\right),\frac{1}{105} \left(25-\sqrt{30}\right),1\right)=5 \sqrt{\frac{15}{2}}+\frac{11}{6}=15.5264.
\end{equation*}
Further computations show that
\begin{equation*}
\underset{0< c< 2,~ 0< r< 1}{\mathrm{max}} \; G(c,r,1) = \sqrt{\frac{15}{2}}+\frac{11}{6}=15.5264.
\end{equation*}

Now we find out the maximum values attained by $G(c,r,p)$ on the edges of $R$.
Evaluating (\ref{no19}) on the edge $c=0, p=1$ we obtain $G(0,r,1)=12(1- r^2)+4 r+3$. A simple computation shows that the maximum of $G(0,r,1)$ is $46/3$ which occurs at $r=1/6$. At the end points of this edge, we have $G(0,0,1)=15$ and $G(0,1,1)=7$. Hence
\begin{equation*}
\underset{0 \leq r \leq 1}{\mathrm{max}} G(0,r,1)= \frac{46}{3}.
\end{equation*}
In view of (\ref{no19}), we obtain by a series of straightforward computations the maximum value of $G(c,r,p)$ on the edges $c=0, r=0$; $c=0,r=1$ and $c=0, p=-1$ as
$$ \underset{-1 \leq p \leq 1}{\mathrm{max}} G(0,0,p)= 15, \qquad \underset{-1 \leq p \leq 1}{\mathrm{max}}  G(0,1,p)= 7 \qquad \mathrm{and} \qquad \underset{0 \leq r \leq 1}{\mathrm{max}} G(0,r,-1)= 15. $$
A simple observation shows that $G(2,r,p)=9$ implies
\begin{equation*}
\underset{-1 \leq p \leq 1}{\mathrm{max}} G(2,0,p)=\underset{-1 \leq p \leq 1}{\mathrm{max}} G(2,1,p)=\underset{0 \leq r \leq 1}{\mathrm{max}} G(2,r,-1)=\underset{0 \leq r \leq 1}{\mathrm{max}} G(2,r,1)=9.
\end{equation*}
As (\ref{no20}) is independent of $p$, the maximum value of $G(c,r,p)$ on the edges $r=0, p=-1$ and $r=0, p=1$ is
\begin{equation*}\label{newereq3}
\underset{0\leq c\leq 2}{\mathrm{max}} G(c,0,-1)=\underset{0\leq c\leq 2}{\mathrm{max}} G(c,0,1)=15.0858.
\end{equation*}
On the edge $r=1, p=-1$, (\ref{no21}) can be simplified to $G(c,1,-1)=|3 c^3+ c^2-9c-1|.$
A straightforward calculation shows that
\begin{equation*}
\underset{0\leq c\leq 2}{\mathrm{max}} G(c,1,-1)=9.
\end{equation*}
On the edge $r=1,p=1$,  (\ref{no21}) reduces to $G(c,1,1)=c^3-c^2-c+7$. A simple computation shows  that
\begin{equation*}
\underset{0\leq c\leq 2}{\mathrm{max}} G(c,1,1)=9.
\end{equation*}

Now we show that $G(c,r,p)$ does not attain  maximum value in the interior of the cuboid $R$.
In order to find the points where the maximum value is obtained in the interior of $R$, we solve $\frac{\partial G(c,r,p)}{\partial c}=0$, $\frac{\partial G(c,r,p)}{\partial r}=0$ and $\frac{\partial G(c,r,p)}{\partial p}=0$.
A computation shows that  $\frac{\partial G(c,r,p)}{\partial p}=0$ implies
\begin{equation}\label{newereq4}
p=\frac{3 c^4 r^2+c^4+3 c^3 r^2+c^3-12 c^2 r^2+2 c^2-12 c r^2+8 c+6}{6 c \left(c^3+2 c+6\right) r}.
\end{equation}
By substituting (\ref{newereq4}) in $\frac{\partial G(c,r,p)}{\partial r}=0$, we get
\begin{equation}\label{newereq5}
 r=\frac{\sqrt{c^3+2 c+6}}{\sqrt{3} \sqrt{c^3-4 c}}.
\end{equation}
It is easy to see that $\frac{c^3+2 c+6}{3 (c^3-4 c)}$ is negative for all values of $c \in [0,2]$. Hence there cannot be an extremum inside the cuboid $R$.
This shows that the maximum value of $|\gamma_3|$ is $\frac{1}{48} \left(5 \sqrt{\frac{15}{2}}+\frac{11}{6}\right)$ for $(c,r,p)=\left(\frac{1}{2} \left(6-\sqrt{30}\right),\frac{1}{105} \left(25-\sqrt{30}\right),1\right)$.

Let $c=c_1$ and $(c,r,p)=\left(\frac{1}{2} \left(6-\sqrt{30}\right),\frac{1}{105} \left(25-\sqrt{30}\right),1\right)$. Then in view of Lemma \ref{thm-4} we obtain $c_2=\frac{1}{12} \left(76-13 \sqrt{30}\right)$ and $c_3=\frac{1}{72} \left(554-75 \sqrt{30}\right)$.
It is not difficult to see that a function $G^*\in \mathcal{P}$ having
$$ (c_1,c_2,c_3)=\left(\frac{1}{2} \left(6-\sqrt{30}\right),\frac{1}{12} \left(76-13 \sqrt{30}\right) ,\frac{1}{72} \left(554-75 \sqrt{30}\right)\right) $$
is given by $G^*(z)=H_{t_1,\mu_1}(z)$ where
$\mu_1 = \frac{1}{12} \left(-1-\sqrt{30}\right)+i \frac{1}{12} \sqrt{113-2 \sqrt{30}} $,
and
$t_1=\frac{3}{278} \left(15 \sqrt{30}-56\right).$
Therefore the bound in ({\rm iii}) is sharp for the function $F_1(z)$ such that
$$zF'_1(z)=\frac{z}{1-z}G^*(z).
$$
\end{proof}
%

\begin{thm}\label{thm-6}
Let $f \in \mathcal{F}_2$ be given by (\ref{i01}). Then
\begin{enumerate}
\item[{(i)}] $|\gamma_1|\leq \frac{1}{4}$,
\item[{(ii)}] $|\gamma_2|\leq \frac{1}{2}$.
\item[{(iii)}] If $0\leq a_2\leq 1$ then $|\gamma_3| \leq \frac{1}{972} \left(95+23 \sqrt{46}\right)$.
\end{enumerate}
The inequalities are sharp.
\end{thm}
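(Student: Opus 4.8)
The plan is to run the argument of Theorem~\ref{thm-5} again with the single change that the starlike majorant is now $g(z)=z/(1-z^2)=z+z^3+z^5+\cdots$, since $f\in\mathcal{F}_2$ is close-to-convex with respect to this $g$. Thus in (\ref{eq6}) we have $b_1=1$, $b_2=0$, $b_3=1$, $b_4=0$, and substituting these four numbers into the coefficient identities drives everything: (\ref{gamma1}) becomes $\gamma_1=c_1/4$, (\ref{gamma2}) becomes $\gamma_2=\frac{1}{48}(8+8c_2-3c_1^2)$, and the polynomial $\phi$ introduced before Theorem~\ref{thm-5} collapses to
\[
\phi(c,r,p)=c^3+4c+c(4-c^2)\,x(2-3x),\qquad x=re^{i\theta}.
\]

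Parts (i) and (ii) come out quickly. For (i), with $b_2=0$ the bound (\ref{gamma11}) reads $4|\gamma_1|\le|c_1|$, and $|c_1|\le2$ for $h\in\mathcal{P}$ settles it, with equality forced when $|c_1|=2$. For (ii) I substitute $b_2=0,\ b_3=1$ into (\ref{modgamma2}) and put $c_1=de^{i\alpha}$, $q=\cos\alpha$; the expression under the square root simplifies (its $d^4q^4$ terms cancel) to $d^4+32d^2q^2-16d^2+64$, which increases in $q^2$ and equals $(d^2+8)^2$ at $q^2=1$. Hence $6|\gamma_2|\le 3-\tfrac38 d^2$, maximal at $d=0$, so $|\gamma_2|\le\tfrac12$, the extremal being $h(z)=(1+z^2)/(1-z^2)$ (that is $c_1=0,\ c_2=2$).

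For (iii) the hypothesis $0\le a_2\le1$ forces $c_1=2a_2\in[0,2]$ to be real, matching the standing assumptions before (\ref{modgamma3}). Exactly as in Theorem~\ref{thm-5} the problem reduces to maximising
\[
G(c,r,p):=3(4-c^2)(1-r^2)+\tfrac12\,|\phi(c,r,p)|
\]
over the cuboid $R=[0,2]\times[0,1]\times[-1,1]$, because $48|\gamma_3|\le G$. The clean computation is on the face $p=1$: there $x=r$, $\phi$ is real and positive, and $G(c,r,1)$ is a downward-opening quadratic in $r$ with vertex $r^{*}=c/(3(2+c))\in[0,\tfrac16]$. Inserting $r^{*}$ reduces $G$ to the cubic $\tfrac{c^3}{3}-\tfrac{8c^2}{3}+2c+12$, whose interior critical point in $[0,2]$ is $c=(8-\sqrt{46})/3$; a short reduction using $3c^2=16c-6$ then gives the maximal value $\frac{4}{81}(95+23\sqrt{46})$, and dividing by $48$ produces the claimed bound $\frac{1}{972}(95+23\sqrt{46})$.

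The hard part, and the bulk of the work, is proving that $p=1$ really yields the global maximum, because the runner-up is uncomfortably close: on the face $r=0$ the function $G(c,0,p)=\tfrac{c^3}{2}-3c^2+2c+12$ is independent of $p$ and peaks at about $12.3547$, just under the value $\approx12.3948$ attained on $p=1$, so this comparison has to be carried out exactly and not numerically. I would still sweep all six faces, twelve edges and the interior as in Theorem~\ref{thm-5}: on $c=0$, $c=2$, $r=1$ and $p=-1$ the maximum of $G$ is comfortably below the target, and for the interior I expect the stationarity system, after eliminating $p$ by the analogue of (\ref{newereq4}), to force $r^2$ to equal a fraction whose denominator $c^3-4c$ is negative on $(0,2)$ while its numerator stays positive (compare (\ref{newereq5})), which is impossible and rules out interior extrema. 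Sharpness is then obtained by reversing the construction: at $(c,r,p)=((8-\sqrt{46})/3,\,r^{*},\,1)$ with $t=1$, Lemma~\ref{thm-4} returns explicit values of $c_2$ and $c_3$ that are realised by some $h=H_{t_1,\mu_1}\in\mathcal{P}$ with $0\le t_1\le\tfrac12$ and $|\mu_1|=1$, and the extremal function is the $f\in\mathcal{F}_2$ with $zf'(z)=\dfrac{z}{1-z^2}\,H_{t_1,\mu_1}(z)$.
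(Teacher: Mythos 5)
Your proposal follows essentially the same route as the paper's proof: the same substitution $b_2=b_4=0$, $b_3=1$, the same reduction of $96|\gamma_3|$ to a maximization over the cuboid $[0,2]\times[0,1]\times[-1,1]$, the same maximizing point $(c,r,p)=\bigl(\tfrac13(8-\sqrt{46}),\tfrac1{75}(11-\sqrt{46}),1\bigr)$ on the face $p=1$, and the same $H_{t,\mu}$-based extremal function. Your parametric elimination on that face (vertex $r^{*}=c/(3(2+c))$, then the one-variable cubic $\tfrac{c^3}{3}-\tfrac{8c^2}{3}+2c+12$) is a cleaner derivation of the same value $\tfrac{4}{81}(95+23\sqrt{46})$ than the paper's simultaneous solution of the two partial-derivative equations, and in (ii) you invoke Lemma \ref{lem2} where the paper applies Lemma \ref{lem3} with $\mu=3/8$; both yield $1/2$. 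Two caveats. First, the sweep of the remaining faces, edges and the interior is asserted rather than executed; the numbers you quote do match the paper's (the runner-up $12.3547$ occurs on $r=0$ and again on the face $p=-1$ along their common edge, so the latter is not ``comfortably below''), and the interior is excluded by the same sign argument (the paper obtains $r^2=(c^2+4)/(3(c^2-4))<0$). Second, in (i) your argument --- exactly like the paper's own proof --- gives the sharp bound $|\gamma_1|\le 1/2$, attained at $|c_1|=2$, which contradicts the stated bound $1/4$; the theorem statement appears to carry a typo, and ``settles it'' glosses over the discrepancy rather than proving the claim as written.
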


\begin{proof} Let $f \in \mathcal{F}_2$. It is evident that $f$ is close-to-convex with respect to the starlike function $g(z)=z/(1-z^2).$ From  (\ref{eq7}), $f(z)$ can be written as
\begin{equation}\label{neweq3}
zf'(z)=\frac{z}{1-z^2}h(z).
\end{equation}
Thus in view of  (\ref{neweq3}),  (\ref{gamma11}) reduces to
\begin{equation}\label{gamma1prob2}
4|\gamma_1| \leq |c_1|.
\end{equation}
Noting that $|c_1| \leq 2$, (\ref{gamma1prob2})  then implies that  $|\gamma_1| \leq 1/2$. It is easy to see that A function $p \in \mathcal{P}$ having $|c_1| =2$ is given by $p(z)=L_{1,\theta}(z)$ for $0 \leq \theta <2\pi$.
Substituting $L_{1,\theta}(z)$ in place of $h(z)$ in (\ref{neweq3}) shows that ${\rm (i)}$ is sharp.

A comparison of   (\ref{neweq3}) and (\ref{eq7}) shows that  (\ref{gamma2}) reduces to
\begin{equation*}\label{gamma2prob2}
6\gamma_2 \leq \left(c_2-\frac{3}{8}c_1^2\right)+1.
\end{equation*}
Applying the triangle inequality in conjunction with Lemma \ref{lem3} with $\mu =3/8$ shows that $|\gamma_2|\leq 1/2$. It is evident from Lemma \ref{lem3} that the equality holds in ${\rm (ii)}$  for the function $\widetilde{F_2}(z)$ such that
$z\widetilde{{F_2}'}(z)=z(1-z^2)^2L_{0,0}(z).
$

Considering (\ref{neweq3}) as an instance of (\ref{eq7}), (\ref{modgamma3}) can be simplified to
\begin{equation}\label{ed11}
96|\gamma_3| \leq 6\left(4-c^2\right)\left(1-r^2\right)+c\sqrt{\phi_2\left(c,r,p\right)},
\end{equation}
where
\begin{align*}
\phi_2(c,r,p) &=\left(c^2+4\right)^2+2r(4-c^2)(4+c^2)(2p+3r-6p^2r) \\
&\quad \quad +r^2(4-c^2)^2(4+9r^2-12rp).
\end{align*}
Let $F(c,r,p)=6(1-r^2)(4-c^2)+c\sqrt{\phi_2(c,r,p)}$. We find points where $F(c,r,p)$ attains the maximum value by finding its local maxima on the six faces and in the interior of $R$.
On the face $c=0$, $F(c,r,p)$ becomes
\begin{equation}\label{ed12}
F(0,r,p)=24 \left(1-r^2\right).
\end{equation}
As $F(0,r,p)$ is a decreasing function of $r$,  the maximum value of $F(0,r,p)$ is attained on the edge $c=0, r=0$. Consequently, we have
%
$$\underset{0\leq r\leq 1,~ -1\leq p\leq 1}{\mathrm{max}} \; F(0,r,p)=24.$$
On the face $c=2$, $F(c,r,p)$ becomes $F(2,r,p)=16$ and hence
$$ \underset{0\leq r\leq 1,~  -1\leq p \leq 1}{\mathrm{max}} \; F(2,r,p)=16. $$
On the face $r=0$, we can simplify $F(c,r,p)$ as
\begin{equation}\label{ed13}
F(c,0,p)=24-6 c^2+c\left(c^2+4\right).
\end{equation}
Since $F(c,0,p)$ is independent of $p$,  we find the set of all points where $\frac{\partial F(c,0,p)}{\partial c}$ vanishes as
$\{\frac{2}{3}\left(3-\sqrt{6}\right)\}\times \{0\}\times [-1,1]$ and hence $F\left(\frac{2}{3} \left(3-\sqrt{6}\right),0,p\right)=\frac{16}{9}\left(9+2\sqrt{6}\right)=24.7093$.
%
Evaluating  (\ref{ed13}) on the edges $c=0, r=0$ and $c=2, r=0$, we obtain
$$ \underset{0\leq c\leq 2,~ -1\leq p\leq 1}{\mathrm{max}} \; F(c,0,p) = 24.7093.$$
On the face $r=1$, $F(c,r,p)$ reduces to
\begin{equation}\label{ed14}
F(c,1,p)=2c\sqrt{24c^2(p-1)-16(p-1)(5+3p)+c^4(2-4p+3p^2)}.
\end{equation}
We solve $\frac{\partial F(c,1,p)}{\partial c}=0$ and $\frac{\partial F(c,1,p)}{\partial p}=0$ to determine points where maxima occur in the face $r=1$. A computation shows that $\frac{\partial F(c,1,p)}{\partial p}=0$ implies
\begin{equation}\label{pindFdp}
p=\frac{2 \left(c^2-2\right)}{3 \left(c^2+4\right)}.
\end{equation}
A slightly involved computation shows that $\frac{\partial F(c,1,p)}{\partial c}=0$ gives
\begin{equation}\label{dFdc}
(18c^4-96)p^2-8(3c^4-12c^2+8)p+(12c^4-96c^2+160)=0.
\end{equation}
Substituting (\ref{pindFdp}) in (\ref{dFdc}) followed by a computation gives
\begin{equation}\label{dFdc2}
\frac{4 \left(3 c^8-160 c^4-512 c^2+2048\right)}{3 \left(c^2+4\right)^2}=0.
\end{equation}
The numerical solution of (\ref{dFdc2}) in $0<c<2$ is $c\approx 1.54836$. Using (\ref{pindFdp}) we then obtain $p\approx 0.414152$.
Therefore $F(1.54836,1,0.414152)=18.0595$.

Using uniform continuity of $F(c,1,p)$ on $[0,2]\times \{1\}\times [-1,1]$ we infer that the difference between the maximum value  of $F(c,1,p)$ and $18.0595$ can be made smaller than an $\epsilon \ll 1$.
On the edge $c=0, r=1$, $F(c,r,p)$ becomes $F(0,1,p)=0$.
On the edge $c=2, r=1$, $F(c,r,p)$ becomes $F(2,1,p)=16$.
On the edge $r=1, p=-1$, (\ref{ed14}) can be simplified to $F(c,1,-1)=2c|3c^2-8|$. It is easy to see that $F(c,1,-1)$ has the maximum value $16$ on $[0,2]$.

A simple computation shows that the maximum value of $F(c,r,p)$ on the edge $r=1, p=1$ is 16.
Therefore,
$$ \underset{0\leq c\leq 2,~ -1\leq p\leq 1}{\mathrm{max}} \; F(c,1,p) \approx 18.0595.$$
On the face $p=-1$, $F(c,r,p)$ reduces to
\begin{align*}
F(c,r,-1)= 6(4-c^2)(1-r^2)+c|c^2+4-(2r-3r^2)(4-c^2)|.
\end{align*}
A computation similar to the one on  the  face $p=-1$ in Theorem \ref{thm-5} shows that $\frac{\partial F(c,r,-1)}{\partial c}=0$ and
$\frac{\partial F(c,r,-1) }{\partial r}=0$ have no solution in the interior of the face $p=-1$. Thus the maximum value is attained on the edges.

On the edge $c=0, p=-1$, $F(c,r,p)$ becomes $F(0,r,-1)=24(1-r^2)$. The maximum value  of $F(0,r,-1)$ is clearly $24$.
On the edge $r=0, p=-1$, $F(c,r,p)$ becomes
\begin{equation*}\label{newereq7}
F(c,0,-1)=6(4-c^2)+c(4+c^2).
\end{equation*}
The maximum value of $F(c,0,-1)$ is $\frac{16}{9}\left(9+2\sqrt{6}\right)=24.7093$ (see the face $r=0$).
The maximum values of $F(c,r,p)$ on the edges $c=2, p=-1$ and $r=1, p=-1$ are $16$ and $10.0566$ respectively (see the faces $c=2$ and $r=1$).
Therefore
\begin{equation*}
\underset{0\leq c\leq 2,~ 0\leq r\leq 1}{\mathrm{max}} \; F(c,r,-1) = \frac{16}{9}\left(9+2\sqrt{6}\right)=24.7093.
\end{equation*}
On the face $p=1$, $F(c,r,p)$ reduces to
\begin{equation*}
F(c,r,1)= 6(4-c^2)(1-r^2)+c|c^2+4+(2r+3r^2)(4-c^2)|.
\end{equation*}
Solving $\frac{\partial F(c,r,1)}{\partial c}=0$ and $\frac{\partial F(c,r,1) }{\partial r}=0$ we obtain $(c,r) = \left(\frac{1}{3} \left(8-\sqrt{46}\right),\frac{1}{75} \left(11-\sqrt{46}\right)\right)$ and hence $F\left(\frac{1}{3} \left(8-\sqrt{46}\right),\frac{1}{75} \left(11-\sqrt{46}\right),1\right)=\frac{8}{81}\left(95+23\sqrt{46}\right)=24.7895.$
It is not difficult to see that the maximum value of $F(c,r,1)$ on the  edges is $24.7093$, which occurs on the edge $r=0$, $p=1$ (see the face $r=0$)
as the computations for the  edges have been done on earlier faces. Therefore
\begin{equation*}
\underset{0\leq c\leq 2,~ 0\leq r\leq 1}{\mathrm{max}} \; F(c,r,1)=\frac{8}{81}\left(95+23\sqrt{46}\right)=24.7895.
\end{equation*}
We now show that $F(c,r,p)$ cannot attain a maximum in the interior of the cuboid $R$.
To determine points in the interior of $R$ where the maxima occurs (if any), we solve $\frac{\partial F(c,r,p)}{\partial c}=0$, $\frac{\partial F(c,r,p)}{\partial r}=0$ and $\frac{\partial F(c,r,p)}{\partial p}=0$.
A computation shows that $\frac{\partial F(c,r,p)}{\partial p}=0$ implies
\begin{equation}\label{newereq8}
p=\frac{3 c^2 r^2+c^2-12 r^2+4}{6 \left(c^2+4\right) r}.
\end{equation}
Using (\ref{newereq8}) in $\frac{\partial F(c,r,p)}{\partial r}=0$ and then solving for $r$ yields
$$ r=\frac{\sqrt{c^2+4}}{\sqrt{3} \sqrt{c^2-4}}.
$$
As $\frac{c^2+4}{3(c^2-4)}$ is negative for all values of $c\in[0,2]$, there cannot be an extremum in the interior of $R$.
This proves that the maximum value of $|\gamma_3|$ is $\frac{1}{972}\left(95+23\sqrt{46}\right)$ for $(c,r,p)=\left(\frac{1}{3} \left(8-\sqrt{46}\right),\frac{1}{75} \left(11-\sqrt{46}\right),1\right)$.

Let $c=c_1$ and $(c,r,p)=\left(\frac{1}{3} \left(8-\sqrt{46}\right),\frac{1}{75} \left(11-\sqrt{46}\right),1\right)$. Then in view of Lemma \ref{thm-4}, we obtain $c_2=\frac{1}{27} \left(134-19 \sqrt{46}\right)$ and $c_3=\frac{2}{243} \left(721-71 \sqrt{46}\right)$.
It is not difficult to see that a function $F^*\in \mathcal{P}$ having
\begin{equation*}
(c_1,c_2,c_3)= \left({\tiny \frac{1}{3}} \left(8-\sqrt{46}\right),{\tiny \frac{1}{27}} \left(134-19 \sqrt{46}\right), {\tiny \frac{2}{243}} \left(721-71 \sqrt{46}\right)\right)
\end{equation*}
is given by $F^*(z)=H_{t_2,\mu_2}(z)$ where
$\mu_2 = \frac{1}{18} \left(-1-\sqrt{46}\right)+i\frac{1}{18} \sqrt{277-2 \sqrt{46}} \quad\mbox{ and }~  t_2 = \frac{1}{10} \left(\sqrt{46}-4\right).$
This shows that  the bound in ${\rm (iii)}$ is sharp for the function $F_2(z)$ such that
$zF_2'(z)=z(1-z^2)^{-1}F^*(z).
$
\end{proof}

%
%
%
%

\begin{thm}\label{thm-7}
Let $f \in \mathcal{F}_3$ be given by (\ref{i01}). Then
\begin{enumerate}
\item[{(i)}] $|\gamma_1|\leq \frac{3}{4}$,
\item[{(ii)}] $|\gamma_2|\leq \frac{2}{5}$.
\item[{(iii)}] If $1/2\leq a_2\leq 3/2$ then  $|\gamma_3| \leq \frac{743+131\sqrt{262}}{7776}$.
\end{enumerate}
The inequalities are sharp.
\end{thm}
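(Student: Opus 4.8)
The plan is to specialize the general inequalities (\ref{gamma11}), (\ref{modgamma2}) and (\ref{modgamma3}) to the starlike function attached to $\mathcal{F}_3$. A function $f\in\mathcal{F}_3$ is close-to-convex with respect to $g(z)=z/(1-z+z^2)$, so by (\ref{eq7}) we have $zf'(z)=\frac{z}{1-z+z^2}h(z)$ for some $h\in\mathcal{P}$. The first step is to record the Taylor coefficients of $g$: using the factorization $1-z+z^2=(1+z^3)/(1+z)$ one finds $g(z)=z(1+z)\sum_{k\ge 0}(-1)^kz^{3k}$, so that $b_2=1$, $b_3=0$ and $b_4=-1$. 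I note that the hypothesis $1/2\le a_2\le 3/2$ in (iii) is exactly $0\le c_1\le 2$ (because $2a_2=b_2+c_1=1+c_1$ with $b_2=1$ real), i.e.\ precisely the normalization under which (\ref{modgamma3}) was set up.

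For (i), inequality (\ref{gamma11}) becomes $4|\gamma_1|\le |b_2|+|c_1|=1+|c_1|\le 3$, with equality when $|c_1|=2$, realized by $h=L_{1,0}$. For (ii) I would substitute $b_2=1,\,b_3=0$ into (\ref{modgamma2}); since $8b_3-4b_2^2=-4$, the modulus term is $|(dq+1+id\sqrt{1-q^2})^2-4|$ and the bound reduces to maximizing
\[
g_3(d,q)=2-\tfrac{d^2}{2}+\tfrac18\sqrt{d^4+4d^3q-12d^2q^2+10d^2-12dq+9}
\]
over the square $[0,2]\times[-1,1]$. Checking the interior critical equations (no admissible real solution, as for $\mathcal{F}_1$) and the four sides is routine; the convenient feature is that on $q=\pm1$ the radicand is a perfect square, namely $(d-1)^2(d+3)^2$ at $q=1$ and $(d+1)^2(d-3)^2$ at $q=-1$, which removes the root. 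The global maximum is $g_3\left(\tfrac15,-1\right)=\tfrac{12}{5}$, giving $|\gamma_2|\le 2/5$, with equality traced through Lemma \ref{lem2} to an explicit $L_{t,\theta}$.

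The substance is (iii). Substituting $b_2=1,b_3=0,b_4=-1$ into (\ref{modgamma3}) gives $\phi(c,r,p)=c^3-2c-10+re^{i\theta}(4-c^2)(2+2c-3cre^{i\theta})$ and the bound $96|\gamma_3|\le G(c,r,p):=6(1-r^2)(4-c^2)+\sqrt{\phi_3(c,r,p)}$, where $\phi_3=|\phi|^2$ is an explicit polynomial in $c,r,p$. I would then maximize $G$ over the cuboid $R=[0,2]\times[0,1]\times[-1,1]$ by examining its interior, its six faces and its twelve edges, exactly as in the proofs of Theorems \ref{thm-5} and \ref{thm-6}. The decisive difference from those theorems is that here the $x$-free part $c^3-2c-10$ of $\phi$ is \emph{negative} on $[0,2]$, so $|\phi|$ is driven up, and the maximum of $G$ is attained, on the face $p=-1$ rather than on $p=1$. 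On that face $\phi<0$ throughout, so
\[
G(c,r,-1)=-(c^3-2c-10)+(4-c^2)\left[6+2r(1+c)+r^2(3c-6)\right].
\]
Solving $\partial G/\partial r=0$ gives $r=\tfrac{1+c}{3(2-c)}$; substituting this and setting $\partial G/\partial c=0$ reduces to $6c^2+28c-11=0$, i.e.\ $c_0=\tfrac{-14+\sqrt{262}}{6}\in(0,2)$, with the corresponding $r_0=\tfrac{1+c_0}{3(2-c_0)}\in(0,1)$. Evaluating $G$ there and using $6c_0^2+28c_0-11=0$ to reduce (one gets $G=\tfrac{262c_0+859}{27}$) yields the maximum $\tfrac{1}{81}\left(743+131\sqrt{262}\right)$, whence $|\gamma_3|\le\tfrac{1}{7776}\left(743+131\sqrt{262}\right)$.

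The main obstacle is precisely this cuboid optimization: one must confirm that the candidate on $p=-1$ dominates every other face and edge and, above all, that $G$ has no interior critical point. For the latter I expect the same mechanism as in (\ref{newereq5}): solving $\partial G/\partial p=0$ for $p$ and feeding this into $\partial G/\partial r=0$ should force $r^2$ to equal a ratio with denominator proportional to $c^3-4c$, which is negative on $(0,2)$, so no real $r$ exists. The faces $r=1$ and $p=\pm1$ will carry the heaviest algebra (as $\zeta_1(c)$ did for $\mathcal{F}_1$), and one must check that the numerically located critical points there stay below $\tfrac{1}{81}(743+131\sqrt{262})\approx 35.35$; note that on $r=1$ the factor $6(1-r^2)(4-c^2)$ vanishes, so $G=|\phi|$ is small there, which helps. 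Finally, for sharpness I would feed the extremal point $(c,r,p)=\left(c_0,r_0,-1\right)$ back through Lemma \ref{thm-4} (with $x=-r_0$ and a unimodular $t$) to obtain a triple $(c_1,c_2,c_3)$ realized by some $H_{t_3,\mu_3}\in\mathcal{P}$, and exhibit the extremal $F_3$ via $zF_3'(z)=\frac{z}{1-z+z^2}H_{t_3,\mu_3}(z)$.
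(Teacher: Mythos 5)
Your proposal follows the paper's proof essentially line for line: specialize (\ref{gamma11}), (\ref{modgamma2}), (\ref{modgamma3}) to $b_2=1$, $b_3=0$, $b_4=-1$, and run the face/edge/interior optimization over the cuboid $R$. Your central computation is right and matches the paper's extremal data exactly: on the face $p=-1$ one has $\phi<0$, the critical equations give $r=\frac{1+c}{3(2-c)}$ and $6c^2+28c-11=0$, hence $c_0=\frac{-14+\sqrt{262}}{6}$, $r_0=\frac{3+\sqrt{262}}{69}$, and $G=\frac{743+131\sqrt{262}}{81}$; your perfect-square simplifications on $q=\pm 1$ in part (ii) and the identification of the hypothesis $1/2\le a_2\le 3/2$ with $0\le c_1\le 2$ are also correct (and more explicit than the paper, which only reports the critical point).

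The one step that would fail as you describe it is the exclusion of interior critical points. You predict that eliminating $p$ and then $r$ forces $r^2$ to equal a ratio with denominator proportional to $c^3-4c<0$, ``so no real $r$ exists.'' That is the mechanism for $\mathcal{F}_1$ and $\mathcal{F}_2$, but here the elimination gives $r^2=\frac{c^3-2c-10}{3(c^3-4c)}$, and the numerator $c^3-2c-10$ is \emph{also} negative on $[0,2]$ (this is the same sign fact you exploit on the face $p=-1$), so the ratio is positive and a real $r$ is not excluded. The paper has to push the elimination one step further: substitute the resulting $r$ and $p$ into $\partial K/\partial c=0$ to obtain $\frac{8(c^3-5c+5)}{c^2-4}=0$, and then check that $c^3-5c+5$ has no root in $(0,2)$. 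You would need this extra step; the rest of the outstanding work (the faces $c=0$, $r=0$, $r=1$, $p=1$ and the twelve edges) is routine as you say, with all of those maxima staying below $35.35$.
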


\begin{proof}
Let $f \in \mathcal{F}_3$. Then $f$ is close-to-convex with respect to the starlike function $g(z)=z/(1-z+z^2)$.
In view of (\ref{eq7}), $f(z)$ can be written as
\begin{equation}\label{neweq4}
zf'(z) = \frac{z}{1-z+z^2}h(z).
\end{equation}
Therefore  (\ref{gamma11}) reduces to
\begin{equation}\label{gamma1prob3}
4|\gamma_1| \leq 1+|c_1|.
\end{equation}
Thus from   (\ref{gamma1prob3}) we obtain  $|\gamma_1| \leq 3/4$ as $|c_1| \leq 2$ for $h\in \mathcal{P}$. A function in $\mathcal{P}$ having $|c_1|=2$ is given by $L_{1,\theta}(z)$, $0\leq \theta <2\pi$
The equality in ${\rm(i)}$ is attained for a function $\widetilde{f}(z)$ such that
$z \widetilde{f}'(z)=z(1-z+z^2)^{-1}L_{1,\theta}(z).$

In view of (\ref{neweq4}),  (\ref{modgamma2}) becomes
\begin{equation}\label{gamma2prob3}
6|\gamma_2| \leq 2-\frac{|c_1|^2}{2}+\frac{1}{8}\sqrt{(d^2+1-2dt)(d^2+9+6dt)}=:k(d,q).
\end{equation}
It is evident from (\ref{gamma2prob3}) that it is sufficient to find the maximum value of $k(d,q)$ in the square $S$ to obtain the same for $|\gamma_2|$.

To obtain points where $k(d,q)$ attains  maximum, we solve $\frac{\partial k(d,q)}{\partial d}=0$ and $\frac{\partial k(d,q)}{\partial q}=0$. The solutions obtained are complex, showing that $k(d,q)$ does not attain  maximum in the interior of $S$.

On the side $d=0$, $k(d,q)$ reduces to $k(d,q)=2.375$. On the side $d=2$, we see that $k(d,q)=(\sqrt{65+8t-48t})/8$. An elementary computation shows that $\underset{-1 \leq q\leq 1}{\mathrm{max}}\;k(2,q)=k(2,1/12)=1.01036.$

On the side $q=-1$, $k(d,q)$ becomes $k(d,-1)=(19+2d-5d^2)/8$. A straightforward computation shows that $\underset{0\leq d \leq 2}{\mathrm{max}}\;k(d,-1)=k(1/5,-1)=12/5=2.4$.

On the side $q=1$, $k(d,q)$ may be simplified as $k(d,1)=(19-2d-5d^2)/8$. As $k(d,1)$ is a decreasing function for $d\in [0,2]$, we see that $\underset{0\leq d \leq 2}{\mathrm{max}}\;k(d,1)=k(0,1)=19/8=2.375$.

Thus the maximum value of $k(d,q)$ in $S$ is $12/5$ and occurs at $(d,q)=(1/5,-1)$. Consequently, (\ref{gamma2prob3}) implies that $|\gamma_2|\leq 2/5$, with the equality occurring for $c_1=-1/5$.

Therefore, in view of Lemma \ref{lem2},  the equality in ${\rm (ii)}$ holds for the function  $\widetilde{F_2}(z)$ such that
$z\widetilde{{F_2}'}(z)=z(1-z+z^2)^{-1}L_{t,\theta}(z)$  where $t=1/10$ and $\theta =\pi$.

Using (\ref{neweq4}) we may rewrite (\ref{modgamma3}) as
\begin{equation}\label{ed16}
96 |\gamma_3| \leq 6(1-r^2)(4-c^2)+\sqrt{\phi_3(c,r,p)}
\end{equation}
where
\begin{align*}
\phi_3(c,r,p) &= (c^3-2c-10)^2+2r(4-c^2)(c^3-2c-10)(2p+2cp-6crp^2+3rc) \\
   &\quad \quad \quad +r^2(4-c^2)^2(4c^2+4+9c^2r^2+8c-12c^2rp-12crp).
\end{align*}
Let $K(c,r,p)=6(1-r^2)(4-c^2)+\sqrt{\phi_3(c,r,p)}$.
We find the points in the cuboid $R$ where the maxima of $K(c,r,p)$ occur.

On the face $c=0$, $K(c,r,p)$ takes the following form
\begin{equation}\label{ed17}
K(0,r,p)=24(1-r^2)+2\sqrt{25-40rp+16r^2}.
\end{equation}
By solving $\frac{\partial K(0,r,p)}{\partial r}=0$ and $\frac{\partial K(0,r,p)}{\partial p}=0$ we obtain $(r,p)=(0,0)$. Thus $K(c,r,p)$ does not attain  maximum in the interior of the face $c=0$.

On the face $c=2$, $K(c,r,p)$ reduces to $K(2,r,p)=6$ and hence
\begin{equation*}
\underset{0< r< 1,~ -1< p< 1}{\mathrm{max}} \; K(2,r,p) = 6.
\end{equation*}
On the face $r=0$, $K(c,r,p)$ may be simplified as
\begin{equation}\label{ed18}
K(c,0,p)=6(4-c^2)+|c^3-2c-10|.
\end{equation}
Since $K(c,0,p)$ is independent of $p$, it suffices to find out points such that $\frac{\partial K(c,0,p)}{\partial c}=0$. The set of all such points is $\{\frac{1}{3}\left(-6+\sqrt{42}\right)\}\times \{0\}\times [-1,1]$ and $$K\left(\frac{1}{3}\left(-6+\sqrt{42}\right),0,p\right)=\frac{14}{9}\left(9+2\sqrt{42}\right)=34.1623.
$$

Therefore
\begin{equation*}
\underset{0< c< 2,~ -1< p< 1}{\mathrm{max}} \; K(c,0,p) = {\tiny \frac{14}{9}}\left(9+2\sqrt{42}\right)=34.1623.
\end{equation*}

On the face $r=1$, $K(c,r,p)$ becomes
\begin{equation}\label{ed19}
K(c,1,p) = \sqrt{\psi_3(c,p)+2 \left(c^3-2 c-10\right) \left(c^2-4\right) \left(6cp^2-2cp-2p-3c\right)}
\end{equation}
where
\begin{equation*}
\psi_3(c,p) = \left(c^3-2 c-10\right)^2+\left(c^2-4\right)^2 (13c^2-12c^2p+8c-12cp+4).
\end{equation*}
A computation shows that $\frac{\partial K(c,1,p)}{\partial p}=0$ implies
\begin{equation}\label{pindKdp}
p=\frac{2 c^4+2 c^3-7 c^2-12 c-5}{3 c \left(c^3-2 c-10\right)}.
\end{equation}
A lengthy computation shows that $\frac{\partial K(c,1,p)}{\partial c}$ implies
\begin{alignat}{2}
&(9c^5-36c^3-45c^2+24c+60)p^2-(12c^5+10c^4-60c^3-60c^2+46c+48)p \label{dKdc}\\
&+(6 c^5+5 c^4-34 c^3-9 c^2+33 c-9)=0. \nonumber
\end{alignat}
Substituting (\ref{pindKdp}) in (\ref{dKdc}) and then performing another  lengthy computation gives
\begin{equation}\label{dKdc2}
 \frac{(c^3-5c+5)\zeta_2 (c)}{3 c^2 \left(c^3-2 c-10\right)^2}=0
\end{equation}
where
\begin{equation*}
\zeta_2 (c)=6 c^{10}-5 c^9-32 c^8-104 c^7+147 c^6+375 c^5+459 c^4-375 c^3-1135 c^2+140 c+100.
\end{equation*}
The numerical solutions of (\ref{dKdc2}) are obtained as $c \approx 0.354278$ and $c \approx 1.27688$.
%
%
Further computations show that $K(c,1,p)$ does not attain a maxima at these points even though the partial derivatives vanish.
On the face $p=-1$, $K(c,r,p)$ reduces to
\begin{equation*}\label{ed20}
K(c,r,-1)=6(1-r^2)(4-c^2)-(c^3-2c-10)+2(4-c^2)(2+2c+3cr^2).
\end{equation*}
By solving $\frac{\partial K(c,r,-1)}{\partial c}=0$ and $\frac{\partial K(c,r,-1)}{\partial r}=0$ we obtain $c=\frac{1}{6}(-14+\sqrt{262})$ and $r=\frac{1}{69}(3+\sqrt{262})$. The corresponding maximum value is $$K\left(\frac{1}{6}(-14+\sqrt{262}),\frac{1}{69}(3+\sqrt{262}),-1\right)=\frac{1}{81}(743+131\sqrt{262})=35.3509.
$$
Therefore
$$ \underset{0< c< 2, ~ 0< r< 1}{\mathrm{max}} \; K(c,r,-1)= 35.3509.$$

On the face $p=1$, $K(c,r,p)$ reduces to
\begin{equation}
K(c,r,1)=6(1-r^2)(4-c^2)-(c^3-2c-10)+(4-c^2)(3cr^2-2-2c).
\end{equation}
It is not difficult to see that $\frac{\partial K(c,r,1)}{\partial c}=0$ and $\frac{\partial K(c,r,1)}{\partial r}=0$ have no solution in the interior of the face $p=1$. Thus $K(c,r,p)$ does not attain maximum in the interior of this face.

Now we find the maximum values attained on the edges of R.
It is evident from (\ref{ed17}) that on the edges $c=0, r=0$ and $c=0, r=1$, the maximum values of $K(c,r,p)$ are \begin{equation*}
\underset{-1\leq p\leq 1}{\mathrm{max}} \;K(0,0,p)=34 \qquad \mathrm{and} \qquad \underset{-1\leq p\leq 1}{\mathrm{max}} \; K(0,1,-1)=18.
\end{equation*}
On the edge $c=0, p=-1$, (\ref{ed17}) reduces to $K(0,r,-1)=24(1-r^2)+2(5+4r)$. An elementary computation shows that the maximum value of $K(0,r,-1)$ is attained at $\left(0,{\tiny \frac{1}{6}},-1\right)$ and $\underset{0\leq r\leq 1}{\mathrm{max}} \; K(0,r,-1)=104/3$.

On the edge $c=0, p=1$, (\ref{ed17}) reduces to $K(0,r,1)=24 \left(1-r^2\right)+2 (5-4 r)$. A computation shows that $\underset{0\leq r\leq 1}{\mathrm{max}} \; K(0,r,1)=34.$

It is evident that $K(2,r,p)=6$ implies
\begin{equation*}
\underset{-1 \leq p \leq 1}{\mathrm{max}} K(2,0,p)=\underset{-1 \leq p \leq 1}{\mathrm{max}} K(2,1,p)=\underset{0 \leq r \leq 1}{\mathrm{max}} K(2,r,-1)=\underset{0 \leq r \leq 1}{\mathrm{max}} K(2,r,1)=6.
\end{equation*}

Considering (\ref{ed18}) and the maximum value of $K(c,0,p)$ we obtain the maximum values on the edges $r=0, p=-1$ and $r=0,p=1$ as
\begin{equation*}
\underset{0\leq c\leq 2}{\mathrm{max}} \; K(c,0,-1)=\underset{0\leq c\leq 2}{\mathrm{max}} \; K(c,0,1)=34.1623.
\end{equation*}

On the edge $r=1,p=-1$, (\ref{ed19}) maybe be simplified as
\begin{alignat*}{1}
& \qquad K(c,1,-1) \\
&=\sqrt{\left(c^3-2 c-10\right)^2-2(5c+2) \left(c^3-2 c-10\right) \left(4-c^2\right) +\left(25 c^2+20 c+4\right) \left(4-c^2\right)^2}.
\end{alignat*}
A computation shows that $K(c,1,-1)$ attains the local maximum at $(1,1,-1)$ and $\underset{0\leq c\leq 2}{\mathrm{max}} \; K(c,1,-1)=32$.

On the edge $r=1, p=1$, (\ref{ed19}) reduces to $K(c,1,1)=2(1+3c+c^2-c^3)$.
An elementary computation shows that
\begin{equation*}
\underset{0\leq c\leq 2}{\mathrm{max}} \; K(c,1,1)=K\left({\tiny \frac{1}{3}}(1+\sqrt{10}),1,1\right)={\tiny \frac{8}{27}}(14+5\sqrt{10})=8.833.
\end{equation*}

Now we show that $K(c,r,p)$ does not attain  maximum in the interior of the cuboid $R$.
At the points where the maxima occur in the cuboid $R$ we have $\frac{\partial K(c,r,p)}{\partial c}=0, \frac{\partial K(c,r,p)}{\partial r}=0$ and $\frac{\partial K(c,r,p)}{\partial p}=0$. A computation shows that
$\frac{\partial K(c,r,p)}{\partial p}=0$ implies
\begin{equation}\label{ed21}
p=\frac{3 c^4 r^2+c^4+3 c^3 r^2+c^3-12 c^2 r^2-2 c^2-12 c r^2-12 c-10}{6 c \left(c^3-2 c-10\right) r}.
\end{equation}
Substituting (\ref{ed21}) in  $\frac{\partial K(c,r,p)}{\partial r}=0$ and then solving for $r$ we obtain
\begin{equation}\label{ed22}
r=\frac{\sqrt{c^3-2 c-10}}{\sqrt{3 c^3-12 c}}.
\end{equation}
Substituting (\ref{ed22}) in (\ref{ed21}) gives
\begin{equation}\label{ed23}
p=\frac{(c+1) \sqrt{c \left(c^2-4\right)}}{\sqrt{3} c \sqrt{c^3-2 c-10}}.
\end{equation}
Substituting  (\ref{ed22}) and (\ref{ed23}) in  $\frac{\partial K(c,r,p)}{\partial c}=0$, we obtain
$$ \frac{8 \left(c^3-5 c+5\right)}{c^2-4} = 0. $$
It can be seen that the roots to the above equation are either negative or imaginary.
This shows that a maximum cannot be attained inside $R$.
Thus we see that the maximum value for $|\gamma_3|$ is attained for
$$(c,r,p)=\left(\frac{1}{6}(-14+\sqrt{262}),\frac{1}{69}(3+\sqrt{262}),-1\right)
$$
and is equal to $(743+131\sqrt{262})/81=35.3509$.
Using these values of $(c,r,p)$ in Lemma \ref{thm-4}, we obtain $c_2=\frac{1}{108} \left(548-37 \sqrt{262}\right)$ and
$c_3=\frac{47525 \sqrt{262}-698926}{44712}$. Therefore for given
$$(c_1,c_2,c_3)=\left(\frac{1}{6}(-14+\sqrt{262}),\frac{1}{108} \left(548-37 \sqrt{262}\right),\frac{47525 \sqrt{262}-698926}{44712}\right)
$$
there exists a function $K^* \in \mathcal{P}$ given by $K^*(z)=H_{t_3,\mu_3}(z)$, where
$$\mu_3 = \frac{-769+35\sqrt{262}}{828}+i\frac{\sqrt{-226727+53830\sqrt{262}}}{828} \quad\mbox{ and } t_3=\frac{32352-687\sqrt{262}}{64622}.
$$
The inequality ({\rm iii}) is sharp for the function $F_3(z)$ such that
$$zF_3'(z)=\frac{z}{1-z+z^2}K^*(z).
$$
\end{proof}


\begin{thebibliography}{99}

\bibitem{Bi-lewan}
{\sc A. Bielecki} and {\sc Z. Lewandowski}, Sur un th\'{e}or\`{e}me concernant les fonctions univalentes lin\'{e}airement accessibles, {\it Annales Polonici Mathematici.} {\bf Vol. 1.}  (12), (1962) 61--63.

\bibitem{Biernacki-1937}
{\sc M. Biernacki}, Sur la repr\'{e}sentation conforme des domaines lin\'{e}airement accessibles. {\it Prace Matematyczno-Fizyczne.} {\bf 44} (1937), 293-314.


\bibitem{Branges-1985}
{\sc L. de Branges}, A proof of the Bieberbach conjecture. {\it Acta
Math.} {\bf 154} (1985), no. 1-2, 137--152.


\bibitem{Duren-book}
{\sc P. L. Duren}, {\it Univalent functions} (Grundlehren der
mathematischen Wissenschaften 259, New York, Berlin, Heidelberg, Tokyo), Springer-Verlag, 1983.


\bibitem{Duren-Leung-1979}
{\sc P. L. Duren} and {\sc Y. J. Leung}, Logarithmic coefficients of univalent functions, {\it J. Analyse Math.} {\bf 36} (1979), 36--43.



\bibitem{Elhosh-1996}
{\sc M. M. Elhosh}, On the logarithmic coefficients of close-to-convex functions, {\it J. Austral. Math. Soc. Ser. A} {\bf 60} (1996), 1--6.




\bibitem{Firoz-Vasu-2016}
{\sc Md. Firoz Ali} and {\sc A. Vasudevarao}, On logarithmic coefficients of some close-to-convex functions, arXiv:1606.05162.


\bibitem{Girela-2000}
{\sc D. Girela}, Logarithmic coefficients of univalent functions, {\it Ann. Acad. Sci. Fenn. Math.} {\bf 25} (2000), 337--350.

\bibitem{goodmanv1}
{\sc A. W. Goodman}, {\it Univalent functions, Vol 1} (Tampa, Florida), Mariner Publishing, 1983.


\bibitem{Kaplan-52}
{\sc W. Kaplan,}  Close-to-convex schlicht  functions,
{\it Michigan Math. J.} {\bf 1}(1952), 169--185.

\bibitem{Lewandowski-1958}
{\sc Z. Lewandowski}, Sur l'identit\'{e} de certaines classes de fonctions univalentes, I, {\it Ann. Univ. Mariae CurieSkłodowska Sect. A}{\bf 12} (1958), 131--146.

\bibitem{Li-Zl-1984}
{\sc R.J. Libera} and {\sc E.J. Zlotkiewicz}, Coefficient bounds for the inverse of a function with derivative in $\mathcal{P}$-II,
{\it Proc. Amer. Math. Soc.} {\bf 92} (1) (1984) 58--60.

\bibitem{Ma-Minda-1992}
{\sc W. Ma} and {\sc D. Minda}, A unified treatment of some special classes of univalent functions, {\it Proc. Conf. on Complex Analysis, Tianjin, China, 1992}, Proceeding and Lecture Notes in Analysis, 1 (International Press, 1994), 157-169

\bibitem{pommerenke}
{\sc C. Pommerenke}, {\it Univalent functions}, Vandenhoeck \& Ruprecht, 1975.

\bibitem{Ponnusamy-Vasu-Yanagihara-2008}
{\sc S. Ponnusamy, A. Vasudevarao} and  {\sc H. Yanagihara},
    Region of variability for close-to-convex functions,
{\it   Complex Var. Elliptic Equ.} {\bf 53}(8)(2008), 709 -- 716.

\bibitem{Ponnusamy-Vasu-Yanagihara-2009}
{\sc  S. Ponnusamy, A. Vasudevarao} and {\sc H. Yanagihara},
    Region of variability for close-to-convex functions-II,
{\it    Appl. Math. Comput.} {\bf 215}(3)(2009),  901--915.

\bibitem{Roth-2007}
{\sc Oliver Roth}, A sharp inequality for the logarithmic coefficients of univalent functions, {\it Proc. Amer. Math. Soc.} {\bf 135}(7) (2007), 2051--2054.

\bibitem{Thomas-2016}
{\sc D.K. Thomas}, On the logarithmic coefficients of close to convex functions, {\it Proc. Amer. Math. Soc.} {\bf 144} (2016), 1681--1687.


\bibitem{Ye-2008}
{\ Z. Ye}, The logarithmic coefficients of close-to-convex functions, {\it Bulletin of the Institute of Mathematics Academia Sinica (New Series)}{\bf 3}{2008}, 445-452.

\end{thebibliography}
\end{document}